\newcommand{\xomegatheta}{\bar X_{\vec \omega,\vec \theta}}
\newcommand{\scrxomega}{\scrX_{\vec \omega}}
\DeclareMathOperator{\ind}{ind}
\DeclareMathOperator\ord{ord}
\DeclareMathOperator\proj{Proj}
\DeclareMathOperator\sing{Sing} 
\DeclareMathOperator\spec{Spec}
\newcommand{\scrV}{\ensuremath{\mathcal{V}}}
\newcommand{\scrX}{\ensuremath{\mathcal{X}}}
\newcommand{\cc}{\ensuremath{\mathbb{C}}}
\newcommand{\pp}{\ensuremath{\mathbb{P}}}
\newcommand{\zz}{\ensuremath{\mathbb{Z}}}
\newcommand{\WP}{\ensuremath{{\bf W}\pp}}
\newtheorem{thm}{Theorem}[section]
\newtheorem*{thm*}{Theorem}
\newtheorem*{lemma*}{Lemma}
\newtheorem{lemma-in-thm}{Lemma}[thm]
\newtheorem{prop}[thm]{Proposition}
\newtheorem*{prop*}{Proposition}
\newtheorem{cor}[thm]{Corollary}
\newtheorem*{claim*}{Claim}
\newtheorem*{conjecture*}{Conjecture}
\theoremstyle{definition} 
\newtheorem{conjecture}[thm]{Conjecture}
\newtheorem{defn}[thm]{Definition}
\newtheorem*{defn*}{Definition}
\newtheorem*{definotation*}{Definition-Notation}
\newtheorem{example}[thm]{Example}
\newtheorem*{example*}{Example}
\newtheorem*{fact*}{Fact}
\newtheorem*{facts*}{Facts}
\newtheorem{notation}[thm]{Notation}
\newtheorem*{bold-note*}{Note}
\newtheorem{bold-question}[thm]{Question}
\newtheorem{rem}[thm]{Remark}
\newtheorem*{reminition*}{Remark-Definition}
\newtheorem*{remexample*}{Remark-Example}
\newtheorem*{remtation*}{Remark-Notation}
\newtheorem*{remuestion*}{Remark-Question}
\newtheorem*{constrinition*}{Construction-Definition}
\theoremstyle{remark}
\newtheorem*{rem*}{Remark}
\newtheorem*{note*}{Note}
\newtheorem*{notation*}{Notation}
\newtheorem*{question*}{Question}
\newtheorem*{questions*}{Questions}
\author{Pinaki Mondal}
\address{Weizmann Institute of Sciences, Israel}
\title{Normal analytic compactifications of $\cc^2$}
\subjclass[2010]{32C20, 14M27, 14J26}
\keywords{normal surface, compactification, analytic surface, rational singularity}
\begin{document}

\begin{abstract}
This is a survey of some results on the structure and classification of normal analytic compactifications of $\cc^2$. Mirroring the existing literature, we especially emphasize the compactifications for which the curve at infinity is irreducible. 
\end{abstract}

\maketitle

\section{Introduction}
A compact normal analytic surface $\bar X$ is called a compactification of $\cc^2$ if there is a subvariety $C$ (the {\em curve at infinity}) such that $\bar X \setminus C$ is isomorphic to $\cc^2$. {\em Non-singular} compactifications of $\cc^2$ have been studied at least since 1954 when Hirzebruch included the problem of finding all such compactifications in his list of problems on differentiable and complex manifolds \cite{hirzebruch-problems}. Remmert and Van de Ven \cite{remmert-van-de-ven} proved that $\pp^2$ is the only non-singular analytic compactification of $\cc^2$ for which the curve at infinity is irreducible. Kodaira as part of his classification of surfaces, and independently Morrow \cite{morrow} showed that every non-singular compactification of $\cc^2$ is {\em rational} (i.e.\ bimeromorphic to $\pp^2$) and can be obtained from $\pp^2$ or some Hirzebruch surface via a sequence of blow-ups. Moreover, Morrow \cite{morrow} gave the complete classification (modulo extraneous blow-ups of points at infinity) of non-singular compactifications of $\cc^2$ for which the curve at infinity has normal crossing singularities. \\

The main topic of this article is therefore {\em singular} normal analytic compactifications of $\cc^2$. The studies on singular normal analytic compactifications so far have concentrated mostly on the (simplest possible) case of compactifications for which the curve at infinity is irreducible; following \cite{ohta}, we call these {\em primitive} compactifications (of $\cc^2$). These were studied from different perspectives in \cite{brentonfication}, \cite{brenton-singular}, \cite{brenton-graph-1}, \cite{miyanishi-zhang}, \cite{furushima}, \cite{ohta}, \cite{kojima}, \cite{koji-hashi}, and more recently in \cite{sub2-1}, \cite{contractibility} and \cite{sub2-2}. The primary motive of this article is to describe these results. For relatively more technical of the results, however, we  omit the precise statements and prefer to give only a `flavour'. The only new results of this article are Proposition \ref{curve-at-infinity-1} and parts of Proposition \ref{singular-proposition}.

\begin{notation}
Unless otherwise stated, by a `compactification' we mean throughout a normal analytic compactification of $\cc^2$.
\end{notation}

\section{Analytic vs.\ algebraic compactifications}
As mentioned in the introduction, non-singular compactifications of $\cc^2$ are projective, and therefore, {\em algebraic} (i.e.\ analytifications of proper schemes). In particular this implies that every compactification $\bar X$ of $\cc^2$ is necessarily {\em Moishezon}, or equivalently, analytification of a proper {\em algebraic space}. Moreover, if $\pi: \bar X' \to X$ is a resolution of singularities of $\bar X$, then the intersection matrix of the curves contracted by $\pi$ is negative definite. On the other hand, by the contractibility criterion of Grauert \cite{grauert}, for every non-singular compactification $\bar X'$ of $\cc^2$ and a (possibly reducible) curve $C \subseteq \bar X' \setminus \cc^2$ with negative definite intersection matrix, there is a compactification $\bar X$ of $\cc^2$ and a birational holomorphic map $\pi: \bar X' \to \bar X$ such that $\pi$ contracts only $C$ (and no other curve). The preceding observation, combined with the classification of non-singular compactifications of $\cc^2$ due to Kodaira and Morrow, forms the basis of our understanding of (normal) compactifications of $\cc^2$. However, it is an open question how to determine if a (singular) compactification of $\cc^2$ constructed via contraction of a given (possibly reducible) negative definite curve (from a non-singular compactification) is algebraic. \cite{contractibility} solves this question in the special case of {\em primitive} compactifications of $\cc^2$ (for which, in particular, algebraicity is equivalent to projectivity - see Theorem \ref{projective-embedding}).\\

More precisely, let $X := \cc^2$ and $\bar X^0 := \pp^2 \supseteq X$. Let $\bar X$ be a primitive compactification of $X$ which is not isomorphic to $\pp^2$ and $\sigma: \bar X^0 \dashrightarrow \bar X$ be the bimeromorphic map induced by identification of $X$. Then $\sigma$ maps the {\em line at infinity} $L_\infty := \pp^2 \setminus X$ (minus the points of indeterminacy) to a point $P_\infty \in C_\infty := \bar X \setminus X$.

\begin{thm}[{\cite[Corollary 1.6]{contractibility}}] \label{algebraic-place}
$\bar X$ is algebraic iff there is an algebraic curve $C \subseteq X$ with one place at infinity\footnote{Recall that $C$ has {\em one place at infinity} iff $C$ meets the line at infinity at only one point $Q$ and $C$ is unibranch at $Q$.} such that $P_\infty$ does {\em not} belong to the closure of $C$ in $\bar X$. 
\end{thm}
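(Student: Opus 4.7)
I would pass to a common resolution $\pi \colon \tilde X \to \bar X$ such that $\tilde\sigma := \sigma \circ \pi \colon \tilde X \to \pp^2$ is a morphism. The preimage $\tilde\sigma^{-1}(L_\infty)$ consists of the strict transform $\tilde L_\infty$ together with the $\tilde\sigma$-exceptional components; exactly one of these---call it $\tilde C_\infty$---is not $\pi$-exceptional and maps birationally onto $C_\infty$, while the rest, including $\tilde L_\infty$, form $\pi^{-1}(P_\infty)$. Thus $\bar C \not\ni P_\infty$ is equivalent to: the strict transform of $\bar C_{\pp^2}$ on $\tilde X$ avoids every $\pi$-exceptional component of $\tilde\sigma^{-1}(L_\infty)$.

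\textbf{Forward direction ($\bar X$ algebraic $\Rightarrow \exists\, C$).} By Theorem~\ref{projective-embedding}, $\bar X$ is projective; fix a very ample $H$. The curve $C_\infty$ is rational, being a birational image of $\tilde C_\infty \cong \pp^1$. For $m \gg 0$, Serre vanishing gives a surjective restriction $H^0(\bar X, mH) \twoheadrightarrow H^0(C_\infty, mH|_{C_\infty})$. After possibly enlarging $m$, I would find a smooth point $R \in C_\infty \setminus \{P_\infty\}$ with $(mH \cdot C_\infty)\cdot R \sim mH|_{C_\infty}$ in $\Pic(C_\infty)$, so that some $D \in |mH|$ satisfies $D \cap C_\infty = \{R\}$ set-theoretically and $P_\infty \notin D$. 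A Bertini argument combined with tangent-vector separation by the very ample $mH$ at $R$ makes a general such $D$ smooth and irreducible. Setting $C := D \cap \cc^2$: the strict transform $\tilde D$ meets $\tilde\sigma^{-1}(L_\infty)$ at a single point of $\tilde C_\infty$, whose $\tilde\sigma$-image is a single point of $L_\infty$, unibranch by smoothness of $\tilde D$. Hence $C$ has one place at infinity, and $\bar C \not\ni P_\infty$ by construction.

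\textbf{Reverse direction ($\exists\, C \Rightarrow \bar X$ algebraic).} The hypothesis forces $\tilde C \cdot F = 0$ for every $\pi$-exceptional component $F$, so $\bar C = \pi_*\tilde C$ is a Cartier divisor on $\bar X$ avoiding $P_\infty$ and meeting $C_\infty$ at a single point $R$. The goal is to produce an ample Cartier divisor on $\bar X$. The naive candidate $\bar C$ alone is generally not ample---two curves in $\cc^2$ ``parallel at the place at infinity of $C$'' can map to closures in $\bar X$ disjoint from $\bar C$, giving $\bar C \cdot \bar G = 0$---so one must exploit the arithmetic of the place at infinity of $C$. Via Abhyankar approximate roots (or Mac Lane--Vaqui\'e key polynomials) at this place, I would construct an infinite family of algebraic curves $C_n \subseteq \cc^2$, each with one place at infinity at the same place as $C$, with $\bar C_n \not\ni P_\infty$ and $\bar C_n \in |n\bar C|$. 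For $n$ large these $\bar C_n$ yield enough global sections of $\scrO_{\bar X}(n\bar C)$ to separate points and tangent directions on $\bar X \setminus \{P_\infty\}$; combined with the local analytic structure at the normal singularity $P_\infty$, this produces an ample Cartier divisor, hence projectivity.

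\textbf{Main obstacle.} The reverse direction is the technical crux: a single Cartier divisor $\bar C$ lacks sufficient positivity, and the bulk of the argument goes into constructing and controlling the approximate-root family $\{C_n\}$ to manufacture the missing sections. Encoding the compatibility between the valuative structure at the place at infinity of $C$ and the analytic singularity at $P_\infty$---specifically, verifying that $\{\bar C_n\}$ pulls back to a ``$\pi$-ample'' class on $\tilde X$ whose pushforward to $\bar X$ is ample---is where the substantive work lies.
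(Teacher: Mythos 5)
There is a genuine gap in your forward direction, and it sits exactly where the paper's real work lies. Your plan is: take a very ample $H$ (via Theorem \ref{projective-embedding}), enlarge $m$, and find a smooth point $R \in C_\infty \setminus \{P_\infty\}$ with $mH|_{C_\infty}$ linearly equivalent to $(\deg)\cdot R$, then apply Bertini. But $C_\infty$ is in general a \emph{singular} rational curve with a unibranch (toric-type) singularity, so $\Pic^0(C_\infty)$ is a positive-dimensional unipotent group; the classes of the form $k\cdot R$, $R$ a smooth point, sweep out only a one-parameter family in $\Pic^k(C_\infty)$, and replacing $H$ by $mH$ just moves you along a cyclic subgroup. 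There is no reason this ever meets that family, so the key step ``some $D \in |mH|$ satisfies $D \cap C_\infty = \{R\}$'' is unjustified (and Bertini does not preserve the one-point-of-contact condition, only smoothness of the general member away from base points). In addition, invoking Theorem \ref{projective-embedding} is circular in the logical order of this paper: the projective-embedding statement (and the equivalence ``algebraic $\Leftrightarrow$ projective'' for primitive compactifications) is itself derived from Theorem \ref{algebraic-place}. The paper's actual route, described in Remark \ref{effective-remark}, is different: algebraicity gives, via Schr\"oer's or Palka's criterion, \emph{some} curve satisfying \eqref{non-intersecting-property}; the substantive step (Proposition 4.2 of \cite{contractibility}) is that existence of any such curve forces the last key form of the divisorial valuation attached to $C_\infty$ to be a polynomial, and that polynomial then cuts out a curve with one place at infinity avoiding $P_\infty$. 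Producing the one-place curve is the hard, new content — not the reverse direction, contrary to your ``main obstacle'' paragraph.

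Your reverse direction also has problems, though of a different kind. First, a setup error: avoiding $P_\infty$ is \emph{not} equivalent to the strict transform avoiding every $\pi$-exceptional component of $\tilde\sigma^{-1}(L_\infty)$. A primitive compactification can have a second singular point $P_0$ (a cyclic quotient, cf.\ Proposition \ref{singular-proposition} and the corollary to Theorem \ref{projective-embedding}) whose exceptional fiber also lies in $\tilde\sigma^{-1}(L_\infty)$ but not over $P_\infty$; the curve $C$ may well pass through $P_0$, so $\tilde C\cdot F=0$ fails for those $F$ and $\bar C$ need not be Cartier (only $\qq$-Cartier at $P_0$). Second, your from-scratch projectivity argument is a sketch whose essential assertions — that approximate roots yield curves with $\bar C_n \in |n\bar C|$, and that these give enough sections to produce an ample Cartier divisor — are exactly the things that would need proof. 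In the paper's framework this direction is immediate: a one-place curve avoiding $P_\infty$ in particular satisfies the weaker condition \eqref{non-intersecting-property}, and Schr\"oer's Theorem 3.3 or Palka's Lemma 2.4 then gives algebraicity directly (the other singularities being sandwiched, hence rational, is what makes $P_\infty$ the only point that matters). So as written the proposal does not establish either implication: the forward direction rests on a false positivity/linear-equivalence claim, and the reverse direction either should quote the known criteria or must supply the missing ampleness argument.
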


\begin{rem}\label{effective-remark}
Theorem \ref{algebraic-place} can be viewed as the {\em effective} version of (a special case of) some other algebraicity criteria (e.g.\ those of \cite{schroe-traction}, \cite{palka-Q1}). More precisely, in the situation of Theorem \ref{algebraic-place}, both \cite[Theorem 3.3]{schroe-traction} and \cite[Lemma 2.4]{palka-Q1} imply that $\bar X$ is algebraic iff there is an algebraic curve $C \subseteq X$ which satisfies the following (weaker) condition:
\begin{align*}
\text{$P_\infty$ does not belong to the closure of $C$ in $\bar X$.} \tag{$*$} \label{non-intersecting-property} 
\end{align*}
Theorem \ref{algebraic-place} implies that in the algebraic case it is possible to choose $C$ with an additional property, namely that it has one place at infinity. A possible way to construct such curves is via the {\em key forms} of the {\em divisorial valuation} on $\cc(X)$ associated with $C_\infty$ (see Remarks \ref{approximate-remark} and \ref{general-key-remark}). The key forms are in general not polynomials, but if they are indeed polynomials, then the {\em last} key form defines a curve $C$ with one place at infinity which satisfies \eqref{non-intersecting-property}. On the other hand, if the last key form is not a polynomial, then it turns out that there are no curve $C \subseteq X$ which satisfies \eqref{non-intersecting-property} \cite[Proposition 4.2]{contractibility}, so that $\bar X$ is not algebraic.
\end{rem}

\begin{example}[{\cite[Examples 1.3 and 2.5]{contractibility}}]\label{non-example}
Let $(u,v)$ be a system of `affine' coordinates near a point $O \in \pp^2$ (`affine' means that both $u=0$ and $v=0$ are lines on $\pp^2$) and $L$ be the line $\{u=0\}$. Let $C_1$ and $C_2$ be curve-germs at $O$ defined respectively by $f_1 := v^5 - u^3$ and $f_2 := (v-u^2)^5 - u^3$. For each $i,r$, $1 \leq i \leq 2$ and $r \geq 0$, let $\tilde X_{i,r}$ be the surface constructed by resolving the singularity of $C_i$ at $O$ and then blowing up $r$ more times the point of intersection of the (successive) strict transform of $C_i$ with the exceptional divisor. Let $\tilde E^{(i,r)}$ be the union of the strict transform $\tilde L_{i,r}$ (on $\tilde X_{i,r}$) of $L$ and (the strict transforms of) all exceptional curves except the exceptional curve $E^*_{i,r}$ for the {\em last} blow up. It is straightforward to compute that for $r \leq 9$ the intersection matrix of $\tilde E^{(i,r)}$ is negative definite, so that $\tilde E^{(i,r)}$ can be analytically contracted to the unique singular point $P_{i,r}$ on a normal surface $\bar X_{i,r}$ which is a primitive compactification of $\cc^2$. Note that the weighted dual graphs of $\tilde E^{(i,r)} \cup E^*_{i,r}$ are {\em identical} (see Figure \ref{fig:non-example}). \\

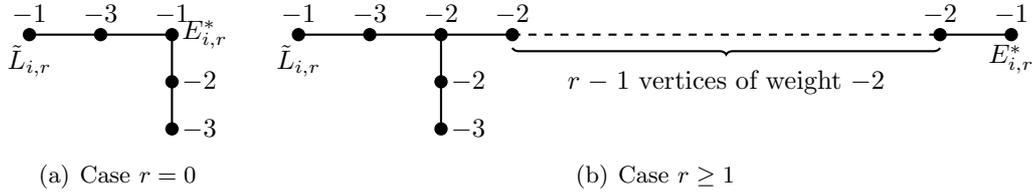
\begin{figure}[htp]
\begin{center}

\subfigure[Case $r = 0$]{
\begin{tikzpicture}[scale=1.25, font = \small] 	
 	\pgfmathsetmacro\edge{.75}
 	\pgfmathsetmacro\vedge{.5}
	
 	\draw[thick] (-2*\edge,0) -- (0,0);
 	\draw[thick] (0,0) -- (0,-2*\vedge);

 	\fill[black] ( - 2*\edge, 0) circle (2pt);
 	\fill[black] (-\edge, 0) circle (2pt);
 	\fill[black] (0, 0) circle (2pt);
 	\fill[black] (0, -\vedge) circle (2pt);
 	\fill[black] (0, - 2*\vedge) circle (2pt);
 	
 	\draw (-2*\edge,0)  node (e0up) [above] {$-1$};
 	\draw (-2*\edge,0 )  node (e0down) [below] {$\tilde L_{i,r}$};
 	\draw (-\edge,0 )  node (e1up) [above] {$-3$};	
 	\draw (0,0 )  node [above] {$-1$};	
 	\draw (0,0 )  node [right] {$E^*_{i,r}$};
 	\draw (0,-\vedge )  node (down1) [right] {$-2$};
 	\draw (0, -2*\vedge)  node (down2) [right] {$-3$};
 			 	
\end{tikzpicture}
}
\subfigure[Case $r \geq 1$]{
\begin{tikzpicture}[scale=1.25, font = \small] 	
 	\pgfmathsetmacro\dashedge{4.5}	
 	\pgfmathsetmacro\edge{.75}
 	\pgfmathsetmacro\vedge{.5}

 	\draw[thick] (-2*\edge,0) -- (\edge,0);
 	\draw[thick] (0,0) -- (0,-2*\vedge);
 	\draw[thick, dashed] (\edge,0) -- (\edge + \dashedge,0);
 	\draw[thick,] (\edge + \dashedge,0) -- (2*\edge + \dashedge,0);
 	
 	\fill[black] ( - 2*\edge, 0) circle (2pt);
 	\fill[black] (-\edge, 0) circle (2pt);
 	\fill[black] (0, 0) circle (2pt);
 	\fill[black] (0, -\vedge) circle (2pt);
 	\fill[black] (0, - 2*\vedge) circle (2pt);
 	\fill[black] (\edge, 0) circle (2pt);
 	\fill[black] (\edge + \dashedge, 0) circle (2pt);
 	\fill[black] (2*\edge + \dashedge, 0) circle (2pt);
 	
 	\draw (-2*\edge,0)  node (e0up) [above] {$-1$};
 	\draw (-2*\edge,0 )  node (e0down) [below] {$\tilde L_{i,r}$};
 	\draw (-\edge,0 )  node (e1up) [above] {$-3$};
 	\draw (0,0 )  node (middleup) [above] {$-2$};	
 	\draw (0,-\vedge )  node (down1) [right] {$-2$};
 	\draw (0, -2*\vedge)  node (down2) [right] {$-3$};
 	\draw (\edge,0)  node (e+1-up) [above] {$-2$};
 	\draw (\edge + \dashedge,0)  node (e-last-1-up) [above] {$-2$};
 	\draw (2*\edge + \dashedge,0)  node [above] {$-1$};
 	\draw (2*\edge + \dashedge,0)  node [below] {$E^*_{i,r}$};
 	
 	\draw [thick, decoration={brace, mirror, raise=5pt},decorate] (\edge,0) -- (\edge + \dashedge,0);
 	\draw (\edge + 0.5*\dashedge,-0.5) node [text width= 5cm, align = center] (extranodes) {$r-1$ vertices of weight $-2$};
 			 	
\end{tikzpicture}
}
\caption{Dual graph of $\tilde E^{(i,r)} \cup E^*_{i,r}$}\label{fig:non-example}
\end{center}
\end{figure}

Choose coordinates $(x,y) := (1/u,v/u)$ on $X := \pp^2\setminus L$. Then $C_1 \cap X = V(y^5 - x^2)$ and $C_2 \cap X = V((xy-1)^5 - x^7)$. Let $\tilde C_{i,r}$ (resp.\ $C_{i,r}$) be the strict transform of $C_i$ on $\tilde X_{i,r}$ (resp.\ $\bar X_{i,r}$). Note that each $C_{1,r}$ satisfies \eqref{non-intersecting-property}, so that all $\bar X_{1,r}$ are algebraic by the criteria of Schr\"oer and Palka. On the other hand, if $L'_{2,r}$ is the pullback on $\tilde X_{2,r}$ of a general line in $\pp^2$, then $\tilde C_{2,r}- 5L'_{2,r}$ intersects components of $\tilde E^{(2,r)}$ trivially and $E^*_{2,r}$ positively, so its positive multiples are the only candidates for total transforms of curves on $\bar X_{2,r}$ satisfying \eqref{non-intersecting-property}, provided the latter surface is algebraic. In other words, Schr\"oer and Palka's criteria imply that $\bar X_{2,r}$ is algebraic if and only if some positive multiple of $\tilde C_{2,r}- 5L'_{2,r}$ is numerically equivalent to an effective divisor. Theorem \ref{algebraic-place} implies that such a divisor does not exist for $r = 8,9$. Indeed, the sequence of key forms associated to (the divisorial valuation on $\cc(x,y)$ corresponding to) $E^*_{i,r}$ for $0 \leq r \leq 9$ are as follows \cite[Example 3.22]{contractibility}:
\begin{align*}
\parbox{1.65cm}{key forms for $E^*_{1,r}$} 
		 &= \begin{cases}
			x,y & \text{if}\ r = 0,\\			
			x,y, y^5 - x^2 & \text{if}\ r\geq 1.
			\end{cases}
		&& &
\parbox{1.65cm}{key forms for $E^*_{2,r}$}
		 &= \begin{cases}
			x,y & \text{if}\ r = 0,\\			
			x,y, y^5 - x^2 & \text{if}\ 1 \leq r\leq 7, \\
			x,y, y^5 - x^2, y^5 - x^2 - 5y^4x^{-1} & \text{if}\ 8 \leq r\leq 9.
			\end{cases}
\end{align*}			
In particular, for $8 \leq r \leq 9$, the last key form for $E^*_{2,r}$ is {\em not} a polynomial. It follows (from Remark \ref{effective-remark} and Theorem \ref{algebraic-place}) that $\bar X_{2,r}$ are algebraic for $r \leq 7$, but $\bar X_{2,8}$ and $\bar X_{2,9}$ are {\em not} algebraic. On the other hand, the key forms for $E^*_{1,r}$ are polynomials for each $r$, $0 \leq r \leq 9$, which implies via the same arguments that $\bar X_{1,r}$ are algebraic, as we have already seen via Schr\"oer and Palka's criteria.
\end{example}

\begin{rem} \label{simplest-sing}
It can be shown (by explicitly computing the geometric genus and multiplicity) that the singularities at $P_{i,8}$ (of Example \ref{non-example}) are in fact {\em hypersurface singularities} which are {\em Gorenstein} and {\em minimally elliptic} (in the sense of \cite{laufer-elliptic}). Minimally elliptic Gorenstein singularities have been extensively studied, and in a sense they form the simplest class of non-rational singularities. Since having only rational singularities implies algebraicity of the surface (via a result of Artin), it follows that the non-algebraic surface $\bar X_{2,8}$ of Example \ref{non-example} is a normal non-algebraic Moishezon surface with the `simplest possible' singularity. 
\end{rem} 

We do not know to what extent the properties of $C$ and $\bar X$ of Theorem \ref{algebraic-place} influence one another (in the case that $\bar X$ is algebraic). It is not hard to see that $\bar X \setminus \{P_\infty\}$ has at most one singular point, and the singularity, if exists, is a cyclic quotient singularity (Proposition \ref{singular-proposition}). The following question was suggested by Tommaso de Fernex.

\begin{bold-question} \label{related-question}
Let $\bar X$ be a primitive algebraic compactification of $\cc^2$ formed by (minimally) resolving the singularities of a curve-germ at a point on the line $L_\infty$ at infinity on $\pp^2$ and then contracting the strict transform of $L_\infty$ and all exceptional curves other than the last one. Let $P_\infty$ be as in Theorem \ref{algebraic-place} and $g$ be the smallest integer such that there exists a curve $C$ on $\bar X$ with geometric genus $g$ which does not pass through $P_\infty$. What is the relation between $g$ and the singularity of $\bar X$ at $P_\infty$?  
\end{bold-question}

Some computed examples suggest the following conjectural answer to the first case of Question \ref{related-question}: 
\begin{conjecture} \label{conjecture}
In the situation of Question \ref{related-question}, $g = 0$ iff the singularity at $P_\infty$ is rational. 
\end{conjecture}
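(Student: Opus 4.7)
The plan is to work through the key forms $g_0, \ldots, g_n$ of the divisorial valuation $\nu$ on $\cc(x,y)$ corresponding to $C_\infty$, as developed in \cite{contractibility}. Algebraicity of $\bar X$ (Remark \ref{effective-remark}) guarantees all $g_i$ are polynomials, so the curve $C^\star := \{g_n = 0\} \subset \cc^2$ has one place at infinity and its closure in $\bar X$ avoids $P_\infty$. Denote by $g^\star$ the geometric genus of $C^\star$; trivially $g \leq g^\star$. The plan is to prove the two intermediate statements \textbf{(A)} $g^\star = 0$ if and only if the singularity at $P_\infty$ is rational, and \textbf{(B)} $g = 0$ implies $g^\star = 0$. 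Together with the trivial inequality, these imply the conjecture.

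Statement \textbf{(A)} is a numerical comparison extracted from the key forms. On the one hand, the key forms determine the Puiseux-type expansion of $C^\star$ at its unique place at infinity, so $g^\star$ is computable via the classical genus formula for a curve with one place at infinity in terms of the semigroup of values of $\nu$. On the other hand, the weighted dual graph of the exceptional configuration $\tilde E$ above $P_\infty$ arises from the blow-ups prescribed by the key forms, so Artin's rational-singularity criterion (vanishing of the arithmetic genus of every positive cycle supported on $\tilde E$) also becomes a numerical condition on the very same data. Verifying that these two conditions coincide should be a direct, if intricate, combinatorial check; small-case verification in Example \ref{non-example} (where the data consists of the exponents $(5,2)$ appearing in $y^5 - x^2$ together with the integer $r$) supports the expected equivalence.

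Statement \textbf{(B)} is the harder reduction. Given an irreducible rational curve $\bar C \subset \bar X$ avoiding $P_\infty$ with defining polynomial $f \in \cc[x,y]$, I would expand $f$ in terms of the key forms, exploit the hypothesis that the closure of $\{f = 0\}$ avoids $P_\infty$ to constrain the expansion, and apply a Riemann--Hurwitz estimate to a generic pencil containing $\bar C$ to conclude $g(\bar C) \geq g^\star$. Rationality of $\bar C$ would then force $g^\star \leq 0$, hence $g^\star = 0$. I expect the main obstacle here: unlike $C^\star$, a general $\bar C$ may have several branches at infinity, and accounting for each branch's contribution to the geometric genus will require a more detailed understanding of the monoid of polynomials whose zero-loci avoid $P_\infty$ than is currently available. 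A natural intermediate goal, which would already yield much of the conjecture, is to show that this monoid is generated (up to numerical equivalence) by $C^\star$ and curves with one place at infinity built from iterates of $\nu$'s key forms.
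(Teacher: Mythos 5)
Be aware first of all that the statement you set out to prove is presented in the paper as a \emph{conjecture}: the author offers no proof, only the remark that ``some computed examples suggest'' it, so there is no argument in the paper to measure yours against. Your submission, by its own admission, is also not a proof but a reduction to two intermediate statements (A) and (B), and neither of them is actually established. For (A), the asserted equivalence ``$g^\star=0$ iff the singularity at $P_\infty$ is rational'' is not the ``direct combinatorial check'' you describe, because the geometric genus of $C^\star=\{g_n=0\}$ is \emph{not} a function of the value semigroup of $\nu$ (equivalently of the key-form/$\delta$-sequence data of Remark \ref{approximate-remark}) alone: that data controls the branch at infinity, hence the genus of a \emph{generic} member of the associated pencil, while $C^\star$ itself may acquire extra affine singularities that lower $g^\star$ further. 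So the ``classical genus formula in terms of the semigroup'' gives at best an upper bound for $g^\star$, and the comparison with Artin's criterion (whose closed form here is Corollary \ref{singularity-cor}\eqref{rational-assertion}, i.e.\ $\omega_0+\omega_{n+1}+1-\sum_{k=1}^n(p_k-1)\omega_k>0$) is not even a well-posed identity of two combinatorial quantities as you state it. Observing that both sides are ``numerical conditions on the same data'' and that Example \ref{non-example} is consistent with the equivalence is exactly the evidential status the conjecture already has in the paper.

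The more serious gap is (B), which is where the entire content of the conjecture lives and which you explicitly leave open. The quantity $g$ in Question \ref{related-question} is an infimum over \emph{all} curves on $\bar X$ avoiding $P_\infty$, and such curves need not have one place at infinity; they are constrained only by condition \eqref{non-intersecting-property} of Remark \ref{effective-remark}. Your claimed inequality $g(\bar C)\geq g^\star$ for every such curve rests on an unspecified ``Riemann--Hurwitz estimate for a generic pencil,'' and no reason is given why a multibranched curve missing $P_\infty$ could not have genus strictly smaller than $g^\star$; your closing paragraph concedes that the needed control of the monoid of polynomials whose zero loci avoid $P_\infty$ ``is not currently available.'' Without (B), even a complete proof of (A) would only yield the implication ``rational singularity $\Rightarrow g=0$'' (via the trivial $g\leq g^\star$), not the converse. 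As it stands, the proposal reformulates the conjecture rather than proving it; if you want to make genuine partial progress, nailing down (A) carefully (including the affine singularities of $C^\star$, or replacing $C^\star$ by a generic member of its pencil) would at least settle one direction.
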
 

A motivation behind Conjecture \ref{conjecture} is to understand the relation between rational singularity at a point and existence of rational curves that do not pass through the singularity, as discovered e.g.\ in \cite[Theorem 0.3]{flenndenberg-rational}. Another motivation is Abhyankar's question about the relation between the genus and {\em semigroup of poles} of plane curves with one place at infinity \cite[Question 3]{sathaye}. More precisely, if $\bar X$ and $C$ are as in Question \ref{related-question}, then the condition that $\bar X$ has a rational singularity induces (via assertion \eqref{rational-assertion} of Corollary \ref{singularity-cor}) a restriction on the semigroup of poles of $C$ (cf.\ Remark \ref{approximate-remark}). In particular, if Conjecture \ref{conjecture} is true, then it (together with Corollary \ref{singularity-cor}) will answer the genus zero case of Abhyankar's question.

\section{Curve at infinity}
Let $\bar X$ be a normal compactification of $X := \cc^2$ and $C_\infty := \bar X \setminus X$ be the curve at infinity. An application of the classification results of non-singular compactifications of $\cc^2$ to the desingularization of $\bar X$ immediately yields that $C_\infty$ is a connected {\em tree} of (possibly singular) rational curves. In this section we take a deeper look at the structure of $C_\infty$ and describe a somewhat stronger version of a result of Brenton \cite{brentonfication}.\\

Let $\Gamma_1, \ldots, \Gamma_k$ be the irreducible components of $C_\infty$. Choose a copy $\bar X^0$ of $\pp^2$ such that the center (i.e.\ image under the natural bimeromorphic map $\bar X \dashrightarrow \bar X^0$ induced by identification of $X$) of each $\Gamma_j$ on $\bar X^0$ is a point $O_j \in L_\infty$, where $L_\infty := \bar X^0 \setminus X$ is the `line at infinity' on $\bar X^0$. Fix a $\Gamma_j$, $1 \leq j \leq k$. For each pair of (distinct) points $P_1, P_2$ on $\Gamma_j$, define a positive integer $m_j(P_1, P_2)$ as follows:
\begin{align*}
m_j(P_1,P_2)
        &:= \min\{i_{O_j}(C_1,C_2):\ \text{for each $i$, $1 \leq i \leq 2$, $C_i$ is an analytic curve germ} \\
        &\mbox{\phantom{$:=\min\{$}} ~  \text{at $O_j$ distinct from (the germ of) $L_\infty$ and the closure of the strict } \\
        &\mbox{\phantom{$:=\min\{$}} ~  \text{transform of $C_i$ on $\bar X$ passes through $P_i$}\},\ \text{where}  \\
i_{O_j}(C_1,C_2) &:= \text{intersection multiplicity of $C_1$ and $C_2$ at $O_j$}.
\end{align*}
It is not hard to see (e.g.\ using \cite[Proposition 4.2]{sub2-1}) that there exists an integer $\tilde m_j$ and a unique point $\tilde P_j \in \Gamma_j$ such that 
\begin{enumerate}
\item $m_j(P_1,P_2) = \tilde m_j$ for all $P_1, P_2 \in \Gamma_j\setminus\{\tilde P_j\}$, and
\item $m_j(\tilde P_j, P') < \tilde m_j$ for all $P' \in \Gamma_j\setminus\{\tilde P_j\}$.
\end{enumerate}

\begin{rem}
$\tilde P_j$ has the following interpretation in the language of the {\em valuative tree} \cite{favsson-tree}: the valuative tree $\scrV_j$ at $O_j$ is the space of all valuations centered at $O_j$ (which has a natural tree-structure rooted at $\ord_{O_j}$). The order of vanishing $\ord_{\Gamma_j}$ along $\Gamma_j$ is an element of $\scrV_j$ and the points on $\Gamma_j$ are in a one-to-one correspondence with the {\em tangent vectors} at $\ord_{\Gamma_j}$ \cite[Theorem B.1]{favsson-tree}. Then $\tilde P_j$ is the point on $\Gamma_j$ which corresponds to the (unique) tangent vector at $\ord_j$ which is represented by $\ord_{O_j}$.
\end{rem}
The result below follows from a combination of \cite[Proposition 4.2]{sub2-1} and \cite[Proposition 3.1]{sub2-2}. 
\begin{prop}[cf.\ {the Proposition in \cite{brentonfication}}]\label{curve-at-infinity-1}
\mbox{}
\begin{enumerate}
\item \label{containsC} $\Gamma_j \setminus \tilde P_j \cong \cc$.
\item \label{singular-position} Either $\tilde P_j$ is a singular point of $\bar X$ or $\tilde P_j \in \Gamma_i$ for some $i \neq j$.
\end{enumerate}
\end{prop}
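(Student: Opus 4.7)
The plan is to pass to a simultaneous resolution $\pi: \tilde X \to \bar X$ and $\tau: \tilde X \to \bar X^0 = \pp^2$ such that $\tilde C_\infty := \tilde X \setminus X$ has simple normal crossings. By the Kodaira--Morrow classification of non-singular compactifications of $\cc^2$, the dual graph of $\tilde C_\infty$ is a tree of smooth rational curves; in particular the strict transform $\tilde \Gamma_j$ is isomorphic to $\pp^1$. A first consequence I will exploit is that, since this dual graph is a tree, for every singular point $P \in \Gamma_j$ of $\bar X$ the connected subtree $\pi^{-1}(P)$ meets $\tilde \Gamma_j$ at exactly one point---any two intersections would produce a cycle through $\tilde \Gamma_j$. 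Hence $\Gamma_j$ is analytically unibranch at each singular point of $\bar X$ that it contains.

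Next I would locate $\tilde P_j$ on $\tilde \Gamma_j$ via the valuative tree. Points of $\tilde \Gamma_j$ are in bijection with tangent vectors at $\ord_{\Gamma_j}$ in $\scrV_j$, and exactly one such tangent vector is represented by $\ord_{O_j}$; call the corresponding point $\tilde P_j^{\mathrm{res}} \in \tilde \Gamma_j$. By construction this is the unique intersection of $\tilde \Gamma_j$ with its neighbor $E_0$ in the dual graph of $\tilde C_\infty$ lying on the subtree containing $\tilde L_\infty$. Standard intersection-theoretic formulas on $\tilde X$ then show that $m_j(P_1,P_2)$ attains its generic value $\tilde m_j$ precisely when both $P_1, P_2$ lift to points of $\tilde \Gamma_j$ distinct from $\tilde P_j^{\mathrm{res}}$, and drops strictly below $\tilde m_j$ whenever one of them equals $\pi(\tilde P_j^{\mathrm{res}})$. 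This identifies $\tilde P_j = \pi(\tilde P_j^{\mathrm{res}})$, which is essentially the content of \cite[Proposition 4.2]{sub2-1}.

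With this identification, assertion (1) reduces to showing that $\pi$ restricts to a biholomorphism from $\tilde \Gamma_j \setminus \{\tilde P_j^{\mathrm{res}}\}$ onto $\Gamma_j \setminus \{\tilde P_j\}$. Injectivity follows from the tree/unibranch argument above (and from the transversality of $\tilde \Gamma_j$ with each other $\tilde \Gamma_i$); local isomorphism at each non-root tangent direction is then a direct computation in the blow-up model. Combined with $\tilde \Gamma_j \cong \pp^1$, we conclude $\Gamma_j \setminus \{\tilde P_j\} \cong \pp^1 \setminus \{\mathrm{pt}\} \cong \cc$. For assertion (2), there are two possibilities for the neighbor $E_0$ above: either $E_0 = \tilde \Gamma_i$ for some $i \neq j$, in which case $\tilde P_j \in \pi(\tilde \Gamma_i) = \Gamma_i$; or $E_0$ is $\pi$-exceptional, and since $\pi$ contracts exactly the preimages of singular points of $\bar X$, the image $\tilde P_j = \pi(E_0)$ is a singular point of $\bar X$. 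This dichotomy is \cite[Proposition 3.1]{sub2-2}.

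The main obstacle is to match the valuative-tree data at $\ord_{\Gamma_j}$ with the local dual-graph structure of $\tilde C_\infty$ near $\tilde \Gamma_j$, and in particular to verify that the non-root tangent vectors at $\ord_{\Gamma_j}$ carry no extra collapsed geometry on the $\bar X$-side. This bookkeeping is the content of \cite[Proposition 4.2]{sub2-1} and \cite[Proposition 3.1]{sub2-2}, and it is what goes beyond the formal Kodaira--Morrow tree classification.
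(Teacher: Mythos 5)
Your write-up ultimately rests on the same two external results that the paper itself invokes (\cite[Proposition 4.2]{sub2-1} and \cite[Proposition 3.1]{sub2-2}) --- the paper's own ``proof'' is literally that combination --- so as a deferral it matches the paper. The problem is the bridging argument you supply, which is circular at the decisive point. You choose $\pi\colon\tilde X\to\bar X$ so that $\tilde C_\infty$ is simple normal crossing (you need this for the tree/transversality reasoning), but you then argue assertion (2) by saying that if the neighbour $E_0$ of $\tilde\Gamma_j$ is $\pi$-exceptional, then its image is a singular point of $\bar X$ ``since $\pi$ contracts exactly the preimages of singular points of $\bar X$''. That is false for an SNC (or simultaneous) resolution: to make $\tilde C_\infty$ normal crossing one must in general also blow up \emph{smooth} points of $\bar X$ at which $C_\infty$ itself is singular or fails to be SNC, and whether $\Gamma_j$ can have such a point (a unibranch singularity at a smooth point of $\bar X$ lying on no other component) is precisely what Proposition \ref{curve-at-infinity-1} is asserting it cannot, except possibly at $\tilde P_j$. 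Assuming it begs the question. Concretely, take $\bar X=\pp^2$ and choose the copy $\bar X^0$ via a nonlinear polynomial automorphism, say $(x,y)\mapsto(x,y+x^2)$: in the simultaneous resolution the neighbour of $\tilde\Gamma_1$ on the path toward $\tilde L_\infty$ is $\pi$-exceptional yet is contracted to the smooth point $[0:1:0]$ of $\bar X$, which is exactly $\tilde P_1$ for this choice. (This degenerate case may well be implicitly excluded by the statement, but it shows that your dichotomy cannot be obtained by the formal contraction argument alone; real input about normal compactifications --- which is what the two cited propositions carry --- is required.)

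The same issue affects assertion (1): a bijective holomorphic map from $\tilde\Gamma_j\setminus\{\tilde P_j^{\mathrm{res}}\}$ onto $\Gamma_j\setminus\{\tilde P_j\}$ is not automatically a biholomorphism; you must rule out cuspidal (unibranch) singularities of $\Gamma_j$ away from $\tilde P_j$, i.e.\ rule out $\pi$-exceptional curves over smooth points of $\bar X$ attached to $\tilde\Gamma_j$ at non-root directions. Your ``direct computation in the blow-up model'' is exactly the statement to be proved, and your closing paragraph concedes that this bookkeeping is the content of \cite[Proposition 4.2]{sub2-1} and \cite[Proposition 3.1]{sub2-2}. So, read as an independent argument, the proposal has a genuine gap (the smooth-point/exceptional-curve dichotomy and the no-cusps-off-$\tilde P_j$ claim are assumed rather than proved); read as a reduction to the two citations, it coincides with the paper's one-line proof but adds an intermediate inference that, as stated, is incorrect.
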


\begin{rem}
Assertion \ref{containsC} of Proposition \ref{curve-at-infinity-1} implies that for every proper birational map $\tilde \Gamma_i \to \Gamma_i$, the pre-image of $\tilde P_i$ consists of only one point and $\tilde \Gamma_i$ is uni-branched at that point. In particular, in the language of \cite{brentonfication}, $\tilde \Gamma_i$ has a {\em totally extraordinary singularity} at $\tilde P_i$. Consequently, Proposition \ref{curve-at-infinity-1} strengthens the main result of \cite{brentonfication}.
\end{rem}

\begin{rem}
Assertion \ref{singular-position} implies in particular that if $\bar X$ is non-singular and $C_\infty$ is irreducible, then $C_\infty$ is non-singular as well. More precisely, a theorem of Remmert and Van de Ven in \cite{remmert-van-de-ven} states that in this scenario $\bar X$ is isomorphic to $\pp^2$. On the other hand, it was shown in \cite{brentonfication} that Proposition \ref{curve-at-infinity-1} together with Morrow's classification \cite{morrow} of `minimal normal compactifications'\footnote{$\bar X$ is a `minimal normal compactification' (in the sense of Morrow), or in modern terminology, a minimal SNC-compactification of $X := \cc^2$ iff 
\begin{inparaenum}[(i)]
\item $\bar X$ is non-singular, 
\item each $\Gamma_i$ is non-singular, 
\item $C_\infty$ has at most normal-crossing singularities, and
\item for all $\Gamma_i$ with self-intersection $-1$, contracting $\Gamma_i$ destroys some of the preceding properties.
\end{inparaenum} 
} of $\cc^2$ implies the theorem of Remmert and Van de Ven.
\end{rem}

\begin{rem}
If $C_\infty$ is not irreducible, then it is possible that some $\Gamma_i$ is singular, even if $\bar X$ is non-singular. One such example was constructed in \cite{brentonfication} for which $C_\infty$ has two irreducible components.
\end{rem}

For special types of compactifications one can say more about the curve at infinity. We say that a compactification $\bar X$ of $\cc^2$ is {\em minimal} if $\bar X$ does not dominate any other (normal analytic) compactification of $\cc^2$, or equivalently (by Grauert's theorem), if the self-intersection number of every irreducible component of $C_\infty$ is non-negative.

\begin{prop}[{\cite[Proposition 3.7]{sub2-1}, \cite[Corollary 3.6]{sub2-2}}] \label{curve-at-infinity-special}
\mbox{}
\begin{enumerate}
\item \label{minimal-at-infinity} If $\bar X$ is minimal, then there is a unique point $P_\infty \in C_\infty$ such that $\Gamma_i \cap \Gamma_j = \{P_\infty\}$ for all $i \neq j$. In particular, $\tilde P_i = P_\infty$ for all $i$. 
\item \label{primitive-at-infinity} If $\bar X$ is primitive algebraic, then $\Gamma_1 = C_\infty$ is non-singular off $\tilde P_1$, and it has at worst a (non-normal) {\em toric singularity} at $\tilde P_1$.
\end{enumerate}
\end{prop}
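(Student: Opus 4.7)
The plan is to derive (1) and (2) from Proposition \ref{curve-at-infinity-1}, Grauert's contractibility criterion, and (for (2)) the key-form machinery underlying Theorem \ref{algebraic-place}.

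For (1), minimality of $\bar X$ forces $\Gamma_i^2 \geq 0$ for every $i$ via Grauert. The first step is the local dichotomy: whenever two distinct components $\Gamma_i$ and $\Gamma_j$ share a point $P$, at least one of $P = \tilde P_i$, $P = \tilde P_j$ holds. This follows from the valuative-tree interpretation of $\tilde P_j$ given in the remark after Proposition \ref{curve-at-infinity-1}: the point $P \in \Gamma_j$ corresponds to a tangent vector at $\ord_{\Gamma_j}$, which is either the ``down'' direction (i.e.\ $P = \tilde P_j$) or else an ``up'' direction pointing to a strictly finer valuation, which in this configuration can only be $\ord_{\Gamma_i}$. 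Applying the dichotomy symmetrically to $\Gamma_i$ and invoking antisymmetry of the refinement partial order yields the claim. The global step is then to upgrade the local dichotomy to $\tilde P_i = P_\infty$ for all $i$: root the dual graph of $C_\infty$ at an arbitrary vertex, orient each edge along the refinement direction just identified, and rule out any directed path $\Gamma_1 \to \Gamma_2 \to \Gamma_3$ of length two. Such a path lifts to a configuration on the minimal desingularization $\pi: \tilde X \to \bar X$ in which the bridge chains contracted by $\pi$, together with $\ord_{\Gamma_3} > \ord_{\Gamma_2} > \ord_{\Gamma_1}$, force $\Gamma_2^2 < 0$ on $\bar X$, contradicting minimality.

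For (2), primitivity collapses $C_\infty$ to a single component $\Gamma_1$, and Proposition \ref{curve-at-infinity-1}\ref{containsC} immediately gives $\Gamma_1 \setminus \tilde P_1 \cong \cc$; in particular $\Gamma_1$ is rational and smooth away from the unibranch point $\tilde P_1$. For the toric structure at $\tilde P_1$, invoke the key-form machinery of Remark \ref{effective-remark}: Theorem \ref{algebraic-place} forces the key forms $\phi_0, \ldots, \phi_n$ of the divisorial valuation $\ord_{C_\infty}$ to be polynomials, with the integers $a_i := -\ord_{C_\infty}(\phi_i)$ generating the semigroup of poles. Each $\phi_i$ then defines a global rational function on $\bar X$ regular outside $\tilde P_1$; tracing the $\phi_i$ through the normalization $\pp^1 \to \Gamma_1$, in a uniformizer $t$ at the preimage of $\tilde P_1$, shows that the completed local ring $\widehat{\scrO}_{\Gamma_1, \tilde P_1}$ embeds in $\cc[[t]]$ with image $\cc[[t^{a_0}, \ldots, t^{a_n}]]$, which is by definition a unibranch toric curve singularity.

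The principal obstacle is the numerical content of (1): verifying that a length-two directed path in the dual graph is incompatible with $\Gamma_i^2 \geq 0$ for all $i$. This requires careful bookkeeping of how the bridge chains contracted by $\pi$ contribute to the self-intersection numbers on $\bar X$, and is the technical heart of \cite[Proposition 3.7]{sub2-1}. A subordinate difficulty in (2) is identifying the polynomial key forms precisely with generators of the local ring at $\tilde P_1$, as carried out in \cite[Corollary 3.6]{sub2-2}.
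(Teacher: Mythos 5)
Note first that the survey contains no proof of this proposition: it is imported wholesale from \cite[Proposition 3.7]{sub2-1} and \cite[Corollary 3.6]{sub2-2}, so your proposal has to stand on its own, and as written both halves have genuine gaps. For assertion \ref{minimal-at-infinity}, the global combinatorial step proves too little. Even granting your local dichotomy (each point of $\Gamma_i \cap \Gamma_j$ is $\tilde P_i$ or $\tilde P_j$ --- which, incidentally, you justify only when $\ord_{\Gamma_i}$ and $\ord_{\Gamma_j}$ lie in the same valuative tree, i.e.\ $O_i = O_j$; the case of distinct centers is never addressed), forbidding directed length-two paths only says that every vertex of the incidence graph of $C_\infty$ is a source or a sink for your orientation. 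That does not give the statement: a component $\Gamma_0$ meeting $\Gamma_1$ at $\tilde P_1$ and $\Gamma_2$ at $\tilde P_2$ with $\tilde P_1 \neq \tilde P_2$ (a source with two outgoing edges) is compatible with the dichotomy, contains no directed length-two path, and yet admits no single point $P_\infty$ lying on all components; nor does anything in your sketch show that \emph{every} pair $\Gamma_i, \Gamma_j$ actually meets, which is part of the assertion. Moreover, the one numerical input that is supposed to carry the argument --- that $\ord_{\Gamma_3} > \ord_{\Gamma_2} > \ord_{\Gamma_1}$ forces $\Gamma_2^2 < 0$ on $\bar X$ --- is not proved but explicitly deferred to \cite[Proposition 3.7]{sub2-1}, i.e.\ to the very result being established, so the argument is circular precisely at its technical heart.

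For assertion \ref{primitive-at-infinity}, deducing smoothness of $C_\infty = \Gamma_1$ off $\tilde P_1$ from assertion \ref{containsC} of Proposition \ref{curve-at-infinity-1} is fine, but the mechanism you propose for the toric singularity is flawed: a key form $\phi_i$ is a polynomial on $\cc^2$, so as a rational function on $\bar X$ its polar locus is the entire curve $C_\infty = \Gamma_1$, not the single point $\tilde P_1$; it therefore cannot be restricted to $\Gamma_1$, and the claimed identification $\widehat{\scrO}_{\Gamma_1,\tilde P_1} \cong \cc[[t^{a_0},\ldots,t^{a_n}]]$ is not obtained by the procedure you describe. To produce regular functions on the curve at infinity one must pass to degree-zero combinations, which is exactly what the route through Theorem \ref{projective-embedding} accomplishes: restricting the weighted-homogeneous equations $G_k$ to $\{w=0\}$ exhibits $C_\infty$ as $\proj$ of a semigroup algebra, and the local semigroup at $\tilde P_1 = P_\infty$ is $\zz_{\geq 0}\langle p_{n+1}\omega_{n+1}\rangle \cap \zz_{\geq 0}\langle \omega_0,\ldots,\omega_n\rangle$ (assertion \ref{curve-singularity} of the corollary following Theorem \ref{projective-embedding}), not in general $\zz_{\geq 0}\langle a_0,\ldots,a_n\rangle$. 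Here too the precise identification is deferred to \cite[Corollary 3.6]{sub2-2}, which is the citation the proposition already carries; so the proposal points in a reasonable direction but does not constitute an independent proof of either assertion.
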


\section{Singular points}
As in the preceding section, let $\bar X$ be a normal compactification of $X := \cc^2$ and $C_\infty$ be the curve at infinity. In Proposition \ref{singular-proposition} below we give upper bounds for $|\sing(\bar X)|$ in the general case and in the case that $\bar X$ is a minimal compactification. Note that both of these upper bounds are sharp \cite[Examples 3.9 and 4.8]{sub2-1}. Moreover, it is not hard to see that the lower bound for $|\sing(\bar X)|$ in both cases is zero, i.e.\ for each $k \geq 1$, there are non-singular minimal compactifications of $\cc^2$ with $k$ irreducible curves at infinity. 

\begin{prop} \label{singular-proposition}
Assume that $C_\infty$ has $k$ irreducible components. Let $\sing(\bar X)$ be the set of singular points of $\bar X$.
\begin{enumerate}[(1)]
\item \label{non-minimal-bound}
\begin{enumerate}
\item \label{non-minimal-upper-bound} $|\sing(\bar X)| \leq 2k$.
\item \label{non-minimal-types} $\bar X$ has at most one singular point which is not {\em sandwiched}\footnote{Recall that an isolated singular point $P$ on a surface $Y$ is {\em sandwiched} if there exists a birational map $Y \to Y'$ such that the image of $P$ is non-singular. Sandwiched singularities are {\em rational} \cite[Proposition 1.2]{lipman}}.
\end{enumerate} 
\item Assume $\bar X$ is a minimal compactification. Then 
\begin{enumerate}
\item \label{minimal-upper-bound} $|\sing(\bar X)| \leq k +1$.
\item \label{minimal-configuration} Let $P_\infty$ be as in assertion \ref{minimal-at-infinity} of Proposition \ref{curve-at-infinity-special}. Then $\left|\sing(\bar X) \setminus \{P_\infty\}\right| \leq k$. Moreover, every point in $\sing(\bar X) \setminus \{P_\infty\}$ is a cyclic quotient singularity.
\end{enumerate}
\end{enumerate} 
\end{prop}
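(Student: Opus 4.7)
The approach is to choose a smooth surface $\tilde X$ dominating both $\bar X$ and $\pp^2$, via morphisms $\pi\colon\tilde X\to\bar X$ and $\phi\colon\tilde X\to\pp^2$ extending the identifications on $\cc^2$; such a $\tilde X$ exists as a common resolution of the birational map $\bar X\dashrightarrow\pp^2$. The dual graph $T$ of $\pi^{-1}(C_\infty)$ is then a tree; its vertices split into the $k$ \emph{marked} vertices corresponding to the strict transforms $\tilde\Gamma_j$ of the components of $C_\infty$, and the \emph{unmarked} vertices corresponding to the curves contracted by $\pi$. Each $P\in\sing(\bar X)$ is in bijection with a connected component $E_P$ of the subgraph on the unmarked vertices. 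By Proposition \ref{curve-at-infinity-1}(\ref{containsC}), $\Gamma_j$ is unibranched at every one of its points, so $\tilde\Gamma_j$ meets each $E_P$ (with $P\in\Gamma_j$) in exactly one point.

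For (\ref{non-minimal-types}), since $\phi$ restricts to an isomorphism on $\cc^2$ it contracts every component of $\pi^{-1}(C_\infty)$ other than the strict transform $\tilde L_\infty$ of the line at infinity of $\pp^2$. Therefore for each $P\in\sing(\bar X)$ with $\tilde L_\infty\not\subset E_P$ the entire fibre $E_P$ is $\phi$-exceptional, so $\phi$ contracts it to a single smooth point of $\pp^2$; the induced birational map $\bar X\dashrightarrow\pp^2$ is then defined at $P$ and witnesses that $P$ is sandwiched. As $\tilde L_\infty$ is a single curve it belongs to at most one $E_P$, giving at most one non-sandwiched singularity.

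For (\ref{non-minimal-upper-bound}), the plan is to reduce to the combinatorial claim that each $\Gamma_j$ contains at most $2$ singular points of $\bar X$; summing then yields $|\sing(\bar X)|\le\sum_j|\sing(\bar X)\cap\Gamma_j|\le 2k$, since every $P$ lies on some $\Gamma_j$. The main obstacle is proving this per-component bound, which requires a local analysis around $\tilde P_j$ via \cite[Proposition 4.2]{sub2-1} together with the valuative-tree parametrisation of the points of $\Gamma_j$, in order to restrict the number of exceptional fibres that can attach to a single $\tilde\Gamma_j$.

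For the minimal case, Proposition \ref{curve-at-infinity-special}(\ref{minimal-at-infinity}) furnishes a unique point $P_\infty$ at which all $\Gamma_i$ meet, with $\tilde P_i=P_\infty$ for every $i$; consequently every $P\in\sing(\bar X)\setminus\{P_\infty\}$ lies on exactly one $\Gamma_j$ and is distinct from $\tilde P_j$. Minimality of $\bar X$ forces $\Gamma_j^2\ge 0$, and a book-keeping of the contributions to $\Gamma_j^2$ coming from the exceptional fibres attached to $\tilde\Gamma_j$ away from (the arc over) $P_\infty$ then forces $\tilde\Gamma_j$ to carry at most one such $P$, proving (\ref{minimal-configuration}) and, via inclusion of $P_\infty$, (\ref{minimal-upper-bound}). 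Finally, for such a $P$ the curve $\tilde\Gamma_j$ meets $E_P$ transversally at a single point of the smooth locus of $\Gamma_j$; the same numerical constraint, combined with minimality of the resolution, forces the dual graph of $E_P$ to be a chain of smooth rational curves, which is equivalent to $P$ being a cyclic quotient singularity.
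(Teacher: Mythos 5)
Your argument for assertion \ref{non-minimal-types} is correct and is essentially the paper's own: the paper works with the normalization $\bar X^1$ of the graph closure in $\bar X\times\pp^2$ and notes that $\bar X^1\to\bar X$ is an isomorphism away from the strict transform of the line at infinity, which is exactly your observation that every exceptional fibre $E_P$ not containing $\tilde L_\infty$ is $\phi$-exceptional, so the map to $\pp^2$ is defined at $P$ and $P$ is sandwiched. (Minor point: the fibres over \emph{singular} points inject into, rather than biject with, the connected components of the unmarked subgraph, since a component may contract to a smooth point; this does not affect the argument.)

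The remaining assertions, however, are not proved. The paper obtains \ref{non-minimal-upper-bound}, \ref{minimal-upper-bound} and the first half of \ref{minimal-configuration} from \cite[Proposition 3.7]{sub2-1}, and the cyclic-quotient statement from \cite[Proposition 3.1]{sub2-2}; you instead outline plans whose decisive steps are explicitly deferred (``the main obstacle is proving this per-component bound'', ``a book-keeping \dots then forces'', ``the same numerical constraint \dots forces''), so nothing is established. Worse, the reduction you propose for \ref{non-minimal-upper-bound} rests on a false claim: a single $\Gamma_j$ can contain more than two singular points of $\bar X$. For instance, blow up $m\geq 3$ distinct points of $L_\infty\subseteq\pp^2$, blow up one further point on each exceptional curve $E_i$ away from the strict transform of $L_\infty$, and contract the $m$ disjoint $(-2)$-curves $\tilde E_i$ (Grauert); the result is a compactification with $k=m+1$ components at infinity and $m$ singular points, \emph{all} lying on the image of $L_\infty$, since each $\tilde E_i$ meets both the strict transform of $L_\infty$ and the last exceptional curve. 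So the bound $2k$ cannot be reached by bounding $|\sing(\bar X)\cap\Gamma_j|$ by $2$ per component; one needs the global structure of the dual graphs at infinity, which is precisely what the cited result supplies. Similarly, in the minimal case, $\Gamma_j^2\geq 0$ by itself bounds nothing: Mumford's pullback gives $\Gamma_j^2=\tilde\Gamma_j^2+(\text{nonnegative correction})$, so $\Gamma_j^2\geq 0$ is compatible with $\tilde\Gamma_j^2$ being very negative and with many exceptional branches attached to $\tilde\Gamma_j$; your book-keeping is neither carried out nor, as stated, a mechanism that forces at most one singular point on $\Gamma_j\setminus\{P_\infty\}$, nor one that forces the fibres $E_P$ to be chains. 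The paper's cyclic-quotient conclusion comes from the explicit local description in \cite[Proposition 3.1]{sub2-2} (as for the point $P_0$ in the primitive case), not from such a numerical argument.
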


\begin{proof}
Assertions \ref{non-minimal-upper-bound} and \ref{minimal-upper-bound} and the first statement of assertion \ref{minimal-configuration} follows from \cite[Proposition 3.7]{sub2-1}. We now prove assertion \ref{non-minimal-types}. If $\bar X$ dominates $\pp^2$, then every singularity of $\bar X$ is sandwiched, as required. So assume that $\bar X^0$ does not dominate $\pp^2$. Let $\bar X^1$ be the normalization of the closure of the image of $\cc^2$ in $\bar X \times \pp^2$ defined via identification of $X$ with a copy of $\cc^2$ in $\pp^2$. Then all singularities of $\bar X^1$ are sandwiched. Assertion \ref{non-minimal-types} now follows from the observation that the natural projection $\bar X^1 \to \bar X$ is an isomorphism over the complement of the strict transform on $\bar X^1$ of the line at infinity on $\pp^2$. The last statement of assertion \ref{minimal-configuration} follows from similar reasoning and an application of \cite[Proposition 3.1]{sub2-2}.
\end{proof}

\section{Classification results for primitive compactifications}
\subsection{Primitive algebraic compactifications}
Using the correspondence with plane curves with one place at infinity (Theorem \ref{algebraic-place}), it is possible to explicitly describe the defining equations of all primitive algebraic compactifications of $\cc^2$. In particular, it turns out that every primitive algebraic compactification is a `weighted complete intersection' (embedded in a weighted projective variety). We now describe this result.

\begin{defn}[{\cite[Definition 3.2]{sub2-2}}] \label{key-seqn}
A sequence $\vec\omega := (\omega_0, \ldots, \omega_{n+1})$, $n \in \zz_{\geq 0}$, of positive integers is called a {\em key sequence} if it has the following properties: let $d_k := \gcd(\omega_0, \ldots, \omega_k)$, $0 \leq k \leq n+1$ and $p_k := d_{k-1}/d_k$, $1 \leq k \leq n+1$. Then
\begin{enumerate}
\item \label{gcd-1-property} $d_{n+1} = 1$, and
\item \label{smaller-property}  $\omega_{k+1} < p_k\omega_k$, $1 \leq k \leq n$.
\end{enumerate} 
A key sequence $(\omega_0, \ldots, \omega_{n+1})$ is called {\em algebraic} if in addition 
\begin{enumerate}
\setcounter{enumi}{2}
\item \label{semigroup-property} $p_k\omega_k \in \zz_{\geq 0} \langle \omega_0, \ldots, \omega_{k-1} \rangle$, $1 \leq k \leq n$.
\end{enumerate}
Finally, a key sequence $(\omega_0, \ldots, \omega_{n+1})$ is called {\em essential} if $p_k \geq 2$ for $1 \leq k \leq n$. Given an arbitrary key sequence $(\omega_0, \ldots, \omega_{n+1})$, it has an associated {\em essential subsequence} $(\omega_0, \omega_{i_1}, \ldots, \omega_{i_l}, \omega_{n+1})$ where $\{i_j\}$ is the collection of all $k$, $1 \leq k \leq n$, such that $p_k \geq 2$.
\end{defn}

\begin{rem} \label{approximate-remark}
Let $\bar X$ be a primitive algebraic compactification of $\cc^2$. Theorem \ref{projective-embedding} below states that $\bar X$ has an associated {\em algebraic key sequence} $\vec \omega$. On the other hand Theorem \ref{algebraic-place} attaches to $\bar X$ a curve $C$ with one place at infinity. It turns out that the essential subsequence $\vec \omega_e$ of $\vec \omega$ is `almost the same as' the {\em $\delta$-sequence} of $C$ (defined e.g.\ in \cite[Section 3]{suzuki}) - see \cite[Remark 2.10]{contractibility} for the precise relation. Moreover, recall (from Remark \ref{effective-remark}) that the {\em last key form} $g$ of the divisorial valuation associated to the curve at infinity on $\bar X$ is a polynomial and defines a curve $C$ as in the preceding sentence. Then it can be shown that the polynomials $G_1, \ldots, G_n$ (which induces an embedding of $\bar X$ into a weighted projective space) defined in Theorem \ref{projective-embedding} below contains a subsequence $G_{i_1}, \ldots, G_{i_l}$ such that $G_{i_j}|_{\cc^2}$ are precisely the {\em approximate roots} (introduced by Abhyankar and Moh \cite{abhya-moh-tschirnhausen}) of $g$. 
\end{rem}

\begin{rem}\label{unique-remark}
Let $\vec\omega := (\omega_0, \ldots, \omega_{n+1})$ be a key sequence. It is straightforward to see that property \ref{smaller-property} implies the following: for each $k$, $1 \leq k \leq n$, $p_k\omega_k$ can be {\em uniquely} expressed in the form $p_k\omega_k = \beta_{k,0}\omega_0 + \beta_{k,1} \omega_1 + \cdots + \beta_{k,k-1}\omega_{k-1}$, where $\beta_{k,j}$'s are integers such that $0 \leq \beta_{k,j} < p_j$ for all $j \geq 1$. $\beta_{k,0} \geq 0$. If $\vec \omega$ is in additional {\em algebraic}, then $\beta_{k,0}$'s of the preceding sentence are {\em non-negative}.
\end{rem}

\begin{thm}[{\cite[Proposition 3.5]{sub2-2}}] \label{projective-embedding}
Let $\vec\omega:=(\omega_0, \ldots, \omega_{n+1})$ be an algebraic key sequence. Let $w,y_0, \ldots, y_{n+1}$ be indeterminates. Pick $\theta_1, \ldots, \theta_n \in \cc^*$ and define polynomials $G_1, \ldots, G_n \in \cc[w,y_0, \ldots, y_{n+1}]$ as follows:
\begin{gather} \label{projective-equations}
G_k := w^{p_k\omega_k - \omega_{k+1}}y_{k+1} - \left( y_k^{p_k} - \theta_k \prod_{j=0}^{k-1}y_j^{\beta_{k,j}}\right)
\end{gather}
where $p_k$'s and $\beta_{k,j}$'s are as in Remark \ref{unique-remark}. Let $\bar X_{\vec \omega,\vec \theta}$ be the subvariety of the weighted projective space $\WP := \pp^{n+2}(1,\omega_0, \ldots, \omega_{n+1})$ (with weighted homogeneous coordinates $[w:y_0:y_1: \cdots :y_{n+1}]$) defined by $G_1, \ldots, G_n$. Then $\bar X_{\vec \omega,\vec \theta}$ is a primitive compactification of $\cc^2 \cong \bar X_{\vec \omega,\vec \theta} \setminus V(w)$. Conversely, every primitive algebraic compactification of $\cc^2$ is of the form $\bar X_{\vec \omega,\vec \theta}$ for some $\vec \omega, \vec \theta$.
\end{thm}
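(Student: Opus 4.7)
\emph{Forward direction (each $\bar X_{\vec\omega,\vec\theta}$ is a primitive compactification).} First I would verify that each $G_k$ is weighted homogeneous of weight $p_k\omega_k$ with respect to the weights $(1,\omega_0,\ldots,\omega_{n+1})$: the identity $p_k\omega_k=\sum_{j=0}^{k-1}\beta_{k,j}\omega_j$ from Remark \ref{unique-remark} balances the two terms inside the parentheses, while property \ref{smaller-property} of Definition \ref{key-seqn} gives $p_k\omega_k-\omega_{k+1}\ge 1$, so the $w$-power is legal. On the chart $\{w=1\}$ the equations $G_k=0$ become $y_{k+1}=y_k^{p_k}-\theta_k\prod_{j=0}^{k-1}y_j^{\beta_{k,j}}$, which inductively express $y_2,\ldots,y_{n+1}$ as polynomials in $y_0,y_1$, so this chart is $\cong\spec\cc[y_0,y_1]=\cc^2$. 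On $\{w=0\}$ the equations reduce to the toric system $y_k^{p_k}=\theta_k\prod_{j<k}y_j^{\beta_{k,j}}$ for $k=1,\ldots,n$, with $y_{n+1}$ a free coordinate in $\pp^{n+1}(\omega_0,\ldots,\omega_{n+1})$. Using $d_{n+1}=1$ I would parametrize $y_{n+1}=t^{\omega_{n+1}}$ and recursively solve for $y_0,\ldots,y_n$ as monomials in $t$ (with normalizing constants absorbed into the $\theta_k$), exhibiting the curve at infinity as an irreducible rational curve. Finally, normality follows from Serre's criterion: the affine chart is smooth, the local rings along the curve at infinity are regular away from at most one `toric' point, and that point gives at worst a cyclic quotient singularity (consistent with Proposition \ref{singular-proposition}).

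\emph{Converse direction.} Let $\bar X$ be a primitive algebraic compactification and $\nu$ the divisorial valuation on $\cc(x,y)$ associated with $C_\infty$. Set $g_0:=x$, $g_1:=y$, and let $g_2,\ldots,g_{n+1}$ be the subsequent key forms of $\nu$ (Remark \ref{approximate-remark}); put $\omega_k:=-\nu(g_k)$. By Remark \ref{effective-remark}, algebraicity of $\bar X$ forces $g_{n+1}$ to be a polynomial, and by induction on the construction of key forms all $g_k$ are then polynomials satisfying a recursion $g_{k+1}=g_k^{p_k}-\theta_k\prod_{j<k}g_j^{\beta_{k,j}}$ with exponents in the canonical range of Remark \ref{unique-remark} and $\theta_k\in\cc^*$. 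Standard properties of key forms give properties \ref{gcd-1-property} and \ref{smaller-property}, and polynomiality of the $g_k$ gives property \ref{semigroup-property}, so $\vec\omega$ is an algebraic key sequence. Taking $w$ to be a suitably scaled local equation of $C_\infty$ at $P_\infty$, the map
\[
\bar X \longrightarrow \WP, \qquad P \longmapsto \bigl[w(P):g_0(P):\cdots:g_{n+1}(P)\bigr]
\]
is a well-defined morphism which lands in $\bar X_{\vec\omega,\vec\theta}$ by the recursion, and restricts to an isomorphism on the affine chart $X\cong\cc^2$. Normality and primitivity of both compactifications then force the map to extend to a global isomorphism $\bar X\cong\bar X_{\vec\omega,\vec\theta}$.

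\emph{Main obstacle.} The hardest part I expect is the converse: one needs to establish that algebraicity propagates from `the last key form is a polynomial' (Remark \ref{effective-remark}) to `every key form is a polynomial whose recursion has exactly the shape \eqref{projective-equations} with exponents in the normalized range'. This requires careful bookkeeping with Abhyankar--Moh approximate roots of $g_{n+1}$ inside $\cc[x,y]$ and an identification of them with the $g_k$ up to scaling by suitable $\theta_k$. On the forward side, the subtle point is the irreducibility (and reducedness) of the curve at infinity of $\bar X_{\vec\omega,\vec\theta}$; the toric parametrization sketched above handles this but rests crucially on $d_{n+1}=1$, and on a consistent $\vec\theta$-normalization that makes the recursive choices of roots unambiguous.
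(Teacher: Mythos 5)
Your overall architecture (explicit chart computation plus normality for the forward direction; key forms of the divisorial valuation at infinity for the converse) is the right one --- the cited source in effect realizes $\xomegatheta$ as $\proj$ of the graded ring $\bigoplus_d\{f:\delta(f)\le d\}$ of the semidegree $\delta$ whose key forms are $x,y,f_1,\ldots,f_n$ (cf.\ Remark \ref{general-key-remark}), and your embedding $P\mapsto[w(P):g_0(P):\cdots:g_{n+1}(P)]$ is the same idea phrased pointwise. But the forward direction has two genuine gaps. First, normality: Serre's criterion needs $S_2$ as well as $R_1$, and you only address $R_1$; the natural source of $S_2$ --- that $\xomegatheta$ is a two-dimensional subvariety cut out by exactly $n$ weighted-homogeneous equations in the Cohen--Macaulay space $\WP$, hence Cohen--Macaulay once one checks (by a dimension count at $w=0$) that there are no components inside $V(w)$ --- never appears. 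Moreover your factual claim that the locus at infinity is ``regular away from at most one toric point, which is at worst a cyclic quotient singularity'' is false: there are in general two candidate singular points $P_0$ and $P_\infty$, and the singularity at $P_\infty$ need not be a quotient singularity (Remark \ref{simplest-sing} exhibits a minimally elliptic Gorenstein point on the \emph{algebraic} surface $\bar X_{1,8}$). Second, the irreducibility of $C_\infty$: your parametrization ``$y_{n+1}=t^{\omega_{n+1}}$ and solve $y_0,\ldots,y_n$ as monomials in $t$'' does not make sense, since at $w=0$ the equations do not couple $y_{n+1}$ to $y_0,\ldots,y_n$; the curve is the projective toric curve $\proj\cc[S]$ of assertion \eqref{curve-1} of the Corollary, and irreducibility requires showing that the $\prod_k p_k$ sheets of the binomial system $y_k^{p_k}=\theta_k\prod_{j<k}y_j^{\beta_{k,j}}$ are permuted transitively by the weighted $\cc^*$-action --- a lattice/gcd argument you do not supply.

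In the converse, the step ``algebraicity forces $g_{n+1}$ to be a polynomial, and by induction all $g_k$ are polynomials satisfying exactly the recursion \eqref{projective-equations}'' is where the real content sits, and ``by induction on the construction'' is not an argument: you need the nontrivial facts that algebraicity of $\bar X$ implies the last key form is a polynomial and that polynomiality of the last key form implies polynomiality of all of them (equivalently $\beta_{k,0}\ge 0$ for all $k$, i.e.\ $\vec\omega$ algebraic). Also be careful with your proposed fix via Abhyankar--Moh approximate roots: the approximate roots of $g_{n+1}$ recover only the \emph{essential} subsequence of key forms (Remark \ref{approximate-remark}), and the surface built from the essential data alone is in general not isomorphic to $\bar X$ --- the non-essential steps ($p_k=1$) carry genuine moduli (this is exactly the $\cc^l$-factor in Theorem \ref{moduli-thm}) --- so the recursion must be established for the full key-form sequence. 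Finally, the assertion that $[w:g_0:\cdots:g_{n+1}]$ is ``a well-defined morphism'' with $w$ ``a suitably scaled local equation of $C_\infty$'' is unjustified: $w$ is not a global function, and well-definedness amounts to showing that the graded ring above is generated by $1,g_0,\ldots,g_{n+1}$ with the $G_k$ as the only relations (or equivalently base-point-freeness of the relevant weighted linear system); one must also rule out that the resulting proper birational map contracts $C_\infty$. These are fixable, but as written they are the missing core of the proof rather than routine bookkeeping.
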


A more or less straightforward corollary is:

\begin{cor}[{\cite[Proposition 3.1, Corollary 3.6]{sub2-2}}]
Let $\bar X$ be a primitive algebraic compactification of $\cc^2$. Consider the equations of $\bar X$ from Proposition \ref{projective-embedding}. Let $C_\infty := \bar X \setminus X = \bar X \setminus V(w)$ and $P_\infty$ (resp.\ $P_0$) be the point on $C_\infty$ with coordinates $[0: \cdots :0:1]$ (resp.\ $[0:1: \bar \theta_1: \cdots : \bar \theta_n : 0]$), where $\bar \theta_k$ is an $p_k$-th root of $\theta_k$, $1 \leq k \leq n$). Then
\begin{enumerate}
\item $\bar X \setminus \{P_0, P_\infty\}$ is non-singular.
\item \label{non-wt-singularity} If $\bar X$ is not a weighted projective space, then $P_\infty$ is a singular point of $\bar X$. 
\item \label{cyclic} Let $\tilde \omega := \gcd(\omega_0,\ldots, \omega_n)$. Then $P_0$ is a cyclic quotient singularity of type $\frac{1}{\tilde \omega}(1,\omega_{n+1})$. 
\item \label{non-singinfinity} $C_\infty \setminus P_\infty \cong \cc$. In particular, $C_\infty$ is non-singular off $P_\infty$.
\item \label{curve-1} Let $S$ be the subsemigroup of $\zz^2$ generated by $\{(\omega_k,0): 0 \leq k \leq n\} \cup \{(0,\omega_{n+1})\}$. Then $C_\infty \cong \proj \cc[S]$, where $\cc[S]$ is the semigroup algebra generated by $S$, and the grading in $\cc[S]$ is induced by the sum of coordinates of elements in $S$.  
\item \label{curve-singularity} Let $\tilde S:= \zz_{\geq 0} \langle p_{n+1}\omega_{n+1} \rangle \cap \zz_{\geq 0} \langle \omega_0, \ldots, \omega_n \rangle$. Then $\cc[C_\infty \setminus P_0] \cong \cc[\tilde S]$,  In particular, $C_\infty$ has at worst a (non-normal) {\em toric singularity} at $P_\infty$.
\end{enumerate}
\end{cor}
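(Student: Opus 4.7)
\smallskip

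The plan is to exploit the explicit weighted homogeneous equations $G_1, \ldots, G_n$ supplied by Theorem \ref{projective-embedding}, analyzing $\bar X$ chart-by-chart in the standard affine cover of $\WP$. Since $\bar X \cap \{w \neq 0\} \cong \cc^2$, all singular phenomena live on $C_\infty = \bar X \cap V(w)$. On this locus the $w$-monomial in each $G_k$ vanishes (using key-sequence property \ref{smaller-property}, which gives $p_k\omega_k - \omega_{k+1} \geq 1$), so the defining equations collapse to $y_k^{p_k} = \theta_k \prod_{j<k} y_j^{\beta_{k,j}}$ for $1 \leq k \leq n$. The two charts $\{y_0 \neq 0\}$ and $\{y_{n+1} \neq 0\}$ of $\WP$ cover $C_\infty$ and contain $P_0$ and $P_\infty$ respectively (not both), so all further analysis can be localized to these two charts.

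For assertions \ref{non-singinfinity} and \ref{curve-1}, I would construct an explicit map $\proj\cc[S] \to C_\infty$. Choose $\bar\theta_1,\ldots,\bar\theta_n \in \cc^*$ inductively so that $\bar\theta_k^{p_k} = \theta_k \prod_{j<k} \bar\theta_j^{\beta_{k,j}}$ (the non-negativity of the $\beta_{k,j}$ from Remark \ref{unique-remark} for \emph{algebraic} key sequences is essential here), and define a graded homomorphism from the homogeneous coordinate ring of $C_\infty$ into $\cc[s,t]$ (with total-degree grading) by $y_0 \mapsto s^{\omega_0}$, $y_k \mapsto \bar\theta_k s^{\omega_k}$ for $1 \leq k \leq n$, and $y_{n+1} \mapsto t^{\omega_{n+1}}$. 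The image is precisely $\cc[S]$, and the uniqueness in Remark \ref{unique-remark} (together with property \ref{gcd-1-property}) shows that the displayed relations generate the kernel, establishing \ref{curve-1}. Assertion \ref{non-singinfinity} then follows by restricting to $\{y_0 \neq 0\}$, which via the inverse parametrization becomes $\spec\cc[t^{\omega_{n+1}}]\cong\cc$ with coordinate essentially $y_{n+1}/y_0^{\omega_{n+1}/\omega_0}$ (made sense of after passing to the smooth cover of the chart).

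For the singularity analysis \ref{non-wt-singularity} and \ref{cyclic} together with the first assertion, I would compute the Jacobian of $(G_1,\ldots,G_n)$ on the local smooth cover $\cc^{n+2}$ of each chart and then take the cyclic quotient. At the slice $y_0 = 1$, the Jacobian at the lift of $P_0$ is lower triangular in the variables $y_1,\ldots,y_n$, with nonzero diagonal entries $-p_k \bar\theta_k^{p_k-1}$, while its $w$- and $y_{n+1}$-columns vanish identically; hence the cover is smooth there, with tangent plane spanned by $w$ and $y_{n+1}$. The chart-defining $\mu_{\omega_0}$-action on $(w, y_1, \ldots, y_{n+1})$ has stabilizer at $P_0$ consisting of those $\lambda \in \mu_{\omega_0}$ with $\lambda^{\omega_k} = 1$ for all $1 \leq k \leq n$, which is precisely $\mu_{\tilde\omega}$ with $\tilde\omega = \gcd(\omega_0,\ldots,\omega_n)$; on the tangent directions $(w, y_{n+1})$ this residual action has weights $(1, \omega_{n+1})$, yielding the cyclic quotient singularity $\tfrac{1}{\tilde\omega}(1,\omega_{n+1})$ of assertion \ref{cyclic}. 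An analogous computation in the chart $\{y_{n+1} = 1\}$ establishes assertion \ref{non-wt-singularity}, and combining both chart analyses (the cover is smooth off the lifts of $P_0$ and $P_\infty$, and the quotient map is free elsewhere by property \ref{gcd-1-property}) gives smoothness off $\{P_0,P_\infty\}$.

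Finally, for assertion \ref{curve-singularity}, identify $\cc[C_\infty \setminus P_0]$ with the degree-zero subring of the localization $\cc[S][(s^{\omega_0})^{-1}]$ via the parametrization $\phi$; tracking which monomials $s^a t^b$ descend gives a direct identification with $\cc[\tilde S]$, and the toric statement about $P_\infty$ follows because $\cc[\tilde S]$ is a finite-index subalgebra of $\cc[\zz_{\geq 0}]$. The main obstacle I anticipate is the bookkeeping in assertion \ref{cyclic}: one must carefully disentangle the extrinsic cyclic quotient structure coming from the ambient weighted projective chart from the intrinsic one on $\bar X$, and verify that the residual action indeed acts with weights $(1, \omega_{n+1})$ rather than some twist — this hinges on the interplay between property \ref{gcd-1-property} (ensuring $\gcd$ of all $\omega_i$ is 1) and the triangular structure of the Jacobian.
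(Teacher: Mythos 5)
Your chart-by-chart strategy is the natural one for this statement, and most of it goes through: the parametrization $y_0\mapsto s^{\omega_0}$, $y_k\mapsto\bar\theta_k s^{\omega_k}$, $y_{n+1}\mapsto t^{\omega_{n+1}}$ (with the $\bar\theta_k$ chosen inductively, which is exactly where algebraicity, i.e.\ $\beta_{k,0}\geq 0$, enters) together with the uniqueness of representations $\sum_k a_k\omega_k$ with $0\leq a_k<p_k$ for $k\geq 1$ does identify the homogeneous coordinate ring of the scheme-theoretic curve at infinity with $\cc[S]$, and your stabilizer computation plus linearization (Cartan's lemma) gives the $\tfrac{1}{\tilde\omega}(1,\omega_{n+1})$ quotient at $P_0$; the degree-zero localizations likewise give assertions (4) and (6), although the affine coordinate on $C_\infty\setminus P_\infty$ is $(t/s)^{\tilde\omega\,\omega_{n+1}}$ (one needs $\gcd(\tilde\omega,\omega_{n+1})=1$ here), not literally $y_{n+1}/y_0^{\omega_{n+1}/\omega_0}$. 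One local correction: the $w$-column of the Jacobian at the lift of $P_0$ does \emph{not} vanish in general (if $p_k\omega_k-\omega_{k+1}=1$ then $\partial G_k/\partial w=y_{k+1}=\bar\theta_{k+1}\neq 0$ for $k<n$); this is harmless because the $(y_1,\ldots,y_n)$-minor is still invertible and the equivariant projection of the tangent plane onto the $(w,y_{n+1})$-coordinates is still an isomorphism, but the argument should be phrased that way rather than via vanishing columns.

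The genuine gap is assertion (2). ``An analogous computation in the chart $\{y_{n+1}=1\}$'' does not establish it: at the lift of $P_\infty$ all coordinates vanish, so the triangularity trick is unavailable and the Jacobian there is typically degenerate; more importantly, (i) singularity of the affine cover at the origin does not by itself imply singularity of the finite quotient (a singular normal surface germ can have a smooth finite quotient, e.g.\ the $A_1$ germ modulo a suitable involution), and (ii) the hypothesis ``$\bar X$ is not a weighted projective space'' concerns the abstract surface, not the presentation: an algebraic key sequence with $n\geq 1$ that is not in normal form can still give a weighted projective space, in which case $P_\infty$ is smooth. For instance $\vec\omega=(2,2,1)$ gives the single equation $wy_2=y_1-\theta_1y_0$ in $\pp(1,2,2,1)$, and eliminating $y_1$ shows $\bar X\cong\pp(1,2,1)$ with $P_\infty$ a smooth point. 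So no purely local Jacobian computation in the given chart can prove (2); you need an additional input that connects the abstract isomorphism type to the local structure at $P_\infty$ --- for example, first reduce to a key sequence in normal form (as in Theorem \ref{aut-thm}) and then compute the multiplicity or embedding dimension of the local ring at $P_\infty$, or argue via the dual graph of the minimal resolution that the configuration contracted to $P_\infty$ blows down to a smooth point only in the weighted projective cases. As it stands, assertion (2) is asserted rather than proved.
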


Let $\bar X_{\vec \omega, \vec \theta}$ be an algebraic primitive compactification of $\cc^2$. We can compute the canonical divisor of $\bar X_{\vec \omega, \vec \theta}$ in terms of $\vec \omega$: 

\begin{thm}[{\cite[Theorem 4.1]{sub2-2}}] \label{canonical-thm}
Let $p_1, \ldots, p_{n+1}$ be as in the definition of algebraic key sequences. Then the canonical divisor of $\bar X_{\vec \omega, \vec \theta}$ is 
\begin{align}
        K_{\bar X_{\vec \omega, \vec \theta}} &= -\left(\omega_0 + \omega_{n+1} + 1 - \sum_{k=1}^n (p_k -1)\omega_k \right)[C_\infty], \label{canonical-formula}
\end{align}
where $[C_\infty]$ is the Weil divisor corresponding to $C_\infty$. Moreover, the {\em index} of $\xomegatheta$ (i.e.\ the smallest positive integer $m$ such that $mK_{\xomegatheta}$ is Cartier) is 
\begin{align}
\ind(K_{\xomegatheta}) = \min\left\{m\in\zz_{\geq 0}: m\left(\omega_0 + \omega_{n+1} + 1 - \sum_{k=1}^n (p_k -1)\omega_k \right) \in \zz p_{n+1} \cap \zz \omega_{n+1}\right\}.
\end{align}
\end{thm}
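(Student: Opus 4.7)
The plan is to obtain $K_{\bar X}$ (writing $\bar X := \bar X_{\vec\omega,\vec\theta}$) via adjunction for the codimension-$n$ complete intersection $\bar X \subset \WP := \pp^{n+2}(1,\omega_0,\ldots,\omega_{n+1})$, and then to derive the index by testing Cartier-ness of $m K_{\bar X}$ at each of the (at most) two singular points $P_0$ and $P_\infty$ identified in the preceding corollary.

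For the formula, the initial observation is that each defining equation $G_k$ is weighted homogeneous of degree $p_k\omega_k$: the term $w^{p_k\omega_k-\omega_{k+1}}y_{k+1}$ has weight $p_k\omega_k$, matching that of $y_k^{p_k}$, while the weight of $\theta_k\prod y_j^{\beta_{k,j}}$ equals $\sum\beta_{k,j}\omega_j = p_k\omega_k$ by Remark \ref{unique-remark}. Since both $\WP$ and the complete intersection $\bar X$ are Cohen--Macaulay, adjunction yields
\[ K_{\bar X} \;=\; \bigl(K_\WP + \sum_{k=1}^n p_k\omega_k \cdot H\bigr)\bigr|_{\bar X} \]
as Weil divisor classes, with $H$ any weight-$1$ Weil divisor on $\WP$. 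Taking $H = [V(w)]$ and observing that $H|_{\bar X} = [C_\infty]$ (since $C_\infty = V(w) \cap \bar X$), substitution of $K_\WP = -(1 + \sum_{i=0}^{n+1}\omega_i)H$ collapses the coefficient to $-a$ with $a := \omega_0 + \omega_{n+1} + 1 - \sum_{k=1}^n(p_k-1)\omega_k$. If one prefers to avoid invoking adjunction along the singular locus of $\WP$, the same computation can be carried out on the smooth locus of $\WP$ (whose complement meets $\bar X$ in codimension at least two) and then extended by normality.

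For the index, it suffices to find the smallest $m \geq 1$ such that $ma[C_\infty]$ is Cartier at both $P_0$ and $P_\infty$, since $\bar X$ is smooth elsewhere. At $P_0$ the preceding corollary identifies the singularity as $\tfrac{1}{p_{n+1}}(1,\omega_{n+1})$ (using $d_{n+1} = 1$, which forces $\tilde\omega = p_{n+1}$); setting $y_0 = 1$ and inductively solving $G_1,\ldots,G_n$ for $y_1,\ldots,y_n$ in terms of local parameters $(w, y_{n+1})$ near $P_0$ (which succeeds because $\bar\theta_k$ is a simple root of $y_k^{p_k} = \theta_k + \cdots$) identifies the local $\mu_{p_{n+1}}$-action as having weights $(1,\omega_{n+1})$ on $(w,y_{n+1})$, so that $[C_\infty]$ has class $1 \in \zz/p_{n+1}$ in the local class group. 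Hence $c[C_\infty]$ is Cartier at $P_0$ iff $p_{n+1}\mid c$. The analysis at $P_\infty$ is where the main obstacle lies, because the preceding corollary does not exhibit that singularity as an explicit cyclic quotient (in general it is not); the workaround is to bypass a direct description of $\bar X$ at $P_\infty$ and instead work with the $\mu_{\omega_{n+1}}$-cover inherited from the weighted-projective ambient. In the chart $y_{n+1}=1$, $\bar X$ is locally the quotient of $V(G_1,\ldots,G_n)\subset\cc^{n+2}$ by the $\mu_{\omega_{n+1}}$-action on $(w,y_0,\ldots,y_n)$ with weights $(1,\omega_0,\ldots,\omega_n)$. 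A Cartier defining equation for $c[C_\infty]$ on $\bar X$ pulls back upstairs to an invariant generator of the principal divisor $c\{w=0\}$ on the cover, hence is of the form $u\cdot w^c$ with $u$ a unit; such a unit has nonzero constant term and must therefore itself be $\mu_{\omega_{n+1}}$-invariant (weight $0$), forcing $\omega_{n+1}\mid c$. Combining the two local conditions, $c[C_\infty]$ is Cartier on $\bar X$ iff $c \in \zz p_{n+1} \cap \zz\omega_{n+1}$, and setting $c = ma$ yields the stated formula for $\ind(K_{\bar X})$.
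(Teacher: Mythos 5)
Your overall strategy --- adjunction for the weighted complete intersection of Theorem \ref{projective-embedding}, giving $K_{\xomegatheta}=-a[C_\infty]$ with $a=\omega_0+\omega_{n+1}+1-\sum_{k=1}^n(p_k-1)\omega_k$, and then testing Cartier-ness of $c[C_\infty]$ only at $P_0$ and $P_\infty$ --- is the natural route suggested by the embedding theorem, and the homogeneity check, the degree bookkeeping, and the analysis at $P_0$ (where $\tilde\omega=d_n=p_{n+1}$, the action is free off the fixed point because $\gcd(p_{n+1},\omega_{n+1})=d_{n+1}=1$, and $[C_\infty]$ generates the local class group $\zz/p_{n+1}$) are correct. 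Your smooth-locus fallback for the adjunction step is in fact the honest way to run it, and it works because $d_{n+1}=1$ forces the generic point of $C_\infty$ to be a smooth point of $\WP$, so $\WP_{\mathrm{sing}}$ meets the surface only in finitely many points and the divisor class extends by normality.

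Two steps at $P_\infty$ are incomplete as written. First, you prove only necessity there: from a local equation $f$ of $c[C_\infty]$ you deduce $\omega_{n+1}\mid c$, and then assert the ``iff''; without the converse the index formula does not follow. The converse is immediate in your own setup and should be stated: if $\omega_{n+1}\mid c$, then $w^c$ is $\mu_{\omega_{n+1}}$-invariant on the cover $Y:=V(G_1,\ldots,G_n)\cap\{y_{n+1}=1\}$, hence descends to a holomorphic function near $P_\infty$ whose divisor is $c[C_\infty]$, so $c[C_\infty]$ is Cartier there. Second, the step ``$\pi^*f=u\cdot w^c$ with $u$ a unit'' silently uses that $Y$ is \emph{normal} near the preimage of $P_\infty$, that the quotient map is unramified in codimension one over $C_\infty$, and that $w$ vanishes to order exactly one along each branch of $\{w=0\}\cap Y$; moreover your evaluation of $u$ at the fixed point needs $Y$ itself, not merely its normalization (on the normalization the points over the origin could be permuted without a fixed point). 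These facts are true but need a line: on $\{w=0\}\cap Y$ any point other than the origin has $y_0\neq0$, hence $y_1,\ldots,y_n\neq0$, and there the Jacobian of $(w,G_1,\ldots,G_n)$ in the variables $(w,y_1,\ldots,y_n)$ is triangular with diagonal $1,-p_1y_1^{p_1-1},\ldots,-p_ny_n^{p_n-1}$ (the off-diagonal entries $\partial G_k/\partial y_{k+1}=w^{p_k\omega_k-\omega_{k+1}}$ vanish at $w=0$ because $\omega_{k+1}<p_k\omega_k$); hence $Y$ is smooth off the origin, so the complete intersection $Y$ is normal, the curve $\{w=0\}\cap Y$ is reduced, and the stabilizer of such a point is $\mu_{d_{n+1}}=\{1\}$, so there is no codimension-one ramification. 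The same computation also justifies the multiplicity-one identity $\ord_{C_\infty}(w)=1$ that you use implicitly when identifying $H|_{\bar X}$ with $[C_\infty]$ in the adjunction step. With these points recorded, the proof is complete.
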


\subsection{Special types of primitive algebraic compactifications}
Straightforward applications of Theorem \ref{canonical-thm} yield the following characterizations of primitive algebraic compactifications of $\cc^2$ which have only rational or elliptic singularities, and those which are Gorenstein. For these results, let $\xomegatheta$ be, as in Theorem \ref{projective-embedding}, the primitive algebraic compactification corresponding to an algebraic key sequence $\vec \omega := (\omega_0, \ldots, \omega_{n+1})$ and $\vec\theta \in (\cc^*)^n$. 

\begin{cor}[Simple singularities, {\cite[Corollary 4.4]{sub2-2}}] \label{singularity-cor}
\mbox{}
\begin{enumerate}
\item \label{rational-assertion} $\xomegatheta$ has only rational singularities iff $\omega_0 + \omega_{n+1} + 1 - \sum_{k=1}^n (p_k -1)\omega_k > 0$.
\item $\xomegatheta$ has only elliptic singularities iff $0 \geq \omega_0 + \omega_{n+1} + 1 - \sum_{k=1}^n (p_k -1)\omega_k > - \omega_{\min}$, where $\omega_{\min} := \min\{\omega_0, \ldots, \omega_{l+1}\}$.
\end{enumerate}
\end{cor}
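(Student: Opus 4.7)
The plan is to combine the canonical divisor formula of Theorem \ref{canonical-thm} with Artin's numerical criterion for rational singularities (and the analogous numerical characterization of elliptic singularities), after localizing the question at the point $P_\infty$. Throughout write
\[
\alpha := \omega_0 + \omega_{n+1} + 1 - \sum_{k=1}^n (p_k-1)\omega_k,
\]
so that Theorem \ref{canonical-thm} reads $K_{\xomegatheta} = -\alpha [C_\infty]$.

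First I would reduce to a single local problem. By the Corollary preceding Theorem \ref{canonical-thm}, $\xomegatheta$ is smooth outside $\{P_0, P_\infty\}$, and whenever $P_0$ is singular it is a cyclic quotient singularity, hence toric and rational (in particular not elliptic). So both assertions amount to deciding whether the singularity at $P_\infty$ is rational (resp.\ elliptic).

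Next, fix a resolution $\pi \colon \tilde X \to \xomegatheta$ of the singularity at $P_\infty$; by construction of $\xomegatheta$ as a contraction, the exceptional divisor $E_\infty = \bigcup E_i$ and its weighted dual graph (with self-intersections) are determined explicitly by $\vec\omega$. Writing $K_{\tilde X} = \pi^* K_{\xomegatheta} + D$ with $D$ an exceptional $\qq$-divisor, and using $\pi^*[C_\infty] \cdot E_i = 0$ for every $i$, one obtains $K_{\tilde X} \cdot Z = D \cdot Z$ for every effective cycle $Z$ supported on $E_\infty$; the coefficients of $D$ are in turn fixed by the adjunction relations $D \cdot E_i = -E_i^2 - 2$.

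Finally I would apply Artin's numerical criteria: $P_\infty$ is rational iff the fundamental cycle $Z_f$ satisfies $p_a(Z_f) = 0$, and it is elliptic iff $\max_Z p_a(Z) = 1$ (attained at $Z_f$). Expanding $p_a(Z) = 1 + \tfrac{1}{2}(Z^2 + D \cdot Z)$ and plugging in the $\vec\omega$-data should produce a closed-form quantity whose sign is governed by $\alpha$; I expect this to yield exactly the inequality $\alpha > 0$ for rationality and the two-sided bound $0 \geq \alpha > -\omega_{\min}$ for ellipticity.

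The main obstacle will be the elliptic case. Rationality is a one-sided inequality on $Z_f$ alone, and a short argument should suffice: $\alpha > 0$ forces $-K_{\xomegatheta}$ to be a positive ample multiple of $[C_\infty]$, and pulling back gives enough negativity of $D \cdot Z_f$ to drive $p_a(Z_f)$ below zero. The elliptic statement, on the other hand, requires both an equality at $Z_f$ and non-existence of cycles with $p_a > 1$; the sharp threshold $-\omega_{\min}$ must arise from the latter, two-sided condition. Heuristically, once $\alpha$ descends to $-\omega_{\min}$, adding to $Z_f$ the exceptional component $E_i$ carrying the minimal weight in the expansion of $\pi^*[C_\infty]$ pushes $p_a$ strictly above $1$, violating ellipticity. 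Turning this heuristic into a rigorous combinatorial argument on the $\vec\omega$-dual graph, and verifying that no other cycle attains $p_a > 1$ in the admissible range, is where the real work lies.
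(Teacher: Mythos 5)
Your overall strategy---reduce to the singularity at $P_\infty$ (since $P_0$ is at worst a cyclic quotient, hence rational), and then combine the formula $K_{\xomegatheta}=-\alpha[C_\infty]$ of Theorem \ref{canonical-thm} with the numerical characterizations of rational and elliptic singularities via arithmetic genera of cycles on the resolution---is indeed the intended spirit: the paper presents the corollary as a ``straightforward application'' of Theorem \ref{canonical-thm} (the proof being \cite[Corollary 4.4]{sub2-2}). But as written your text is a plan, not a proof: the entire content of the corollary is the quantitative statement that $p_a(Z_f)\leq 0$ exactly when $\alpha>0$ and that the elliptic range is exactly $0\geq\alpha>-\omega_{\min}$, and at precisely this point you write ``should produce'', ``I expect'', ``heuristically'', and defer ``the real work''. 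Nothing in the proposal pins down the fundamental cycle or the coefficients of the relative canonical divisor $D$ in terms of $\vec\omega$ (equivalently, in terms of the dual graph determined by the essential subsequence, cf.\ Section \ref{dual-subsection}), so neither direction of either ``iff'' is actually established, and the appearance of the specific threshold $-\omega_{\min}$ is never explained beyond a guess about which component to add to $Z_f$.

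Moreover, the one concrete shortcut you do offer for the rational case is unsound as stated: ampleness of $-K$ on a normal surface does not imply rationality of its singularities (the cubic cone in $\pp^3$, i.e.\ the projective cone over a smooth plane cubic, has $-K$ ample and a simple elliptic, hence non-rational, singularity --- compare also Remark \ref{simplest-sing}, where $-K$-positivity considerations coexist with a minimally elliptic point). So ``$\alpha>0$ forces $-K$ ample, hence enough negativity of $D\cdot Z_f$'' cannot substitute for the computation on the $\vec\omega$-determined exceptional configuration; the rationality threshold, like the elliptic one, must come out of that computation, which is exactly the step you have skipped. To turn this into a proof you would need to write down the fundamental cycle and the canonical cycle for the resolution graph attached to $\vec\omega$ (or equivalently compute $p_a$ via adjunction there), verify $p_a(Z_f)=0$ precisely when $\alpha>0$, $p_a(Z_f)=1$ precisely when $0\geq\alpha>-\omega_{\min}$, and also handle the converse implications, none of which is present.
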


\begin{cor}[Gorenstein, {\cite[Proposition 4.5]{sub2-2}}] \label{gorenstein-cor}
$\xomegatheta$ is Gorenstein iff the following properties hold:
\begin{enumerate}
\item \label{gor1} $p_{n+1}$ divides $\omega_{n+1} + 1$, and
\item \label{gor2} $\omega_{n+1}$ divides $\omega_0 + \omega_{n+1} + 1 - \sum_{k=1}^n (p_k -1)\omega_k$. 
\end{enumerate}
\end{cor}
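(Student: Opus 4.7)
The plan is to derive both divisibility conditions directly from the index formula in Theorem \ref{canonical-thm}. Recall that $\xomegatheta$ is Gorenstein precisely when $K_{\xomegatheta}$ is Cartier, i.e., when $\ind(K_{\xomegatheta}) = 1$. So I would set
\[
a := \omega_0 + \omega_{n+1} + 1 - \sum_{k=1}^n (p_k-1)\omega_k,
\]
and note that by Theorem \ref{canonical-thm} being Gorenstein is equivalent to $a \in \zz p_{n+1} \cap \zz \omega_{n+1}$, i.e., $p_{n+1} \mid a$ \emph{and} $\omega_{n+1} \mid a$.

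The condition $\omega_{n+1} \mid a$ is literally the statement of \ref{gor2}, so this half needs no further argument. For the other half, I would unpack the definition of $p_{n+1}$: by Definition \ref{key-seqn}, $p_{n+1} = d_n/d_{n+1}$, and property \ref{gcd-1-property} gives $d_{n+1} = 1$, hence $p_{n+1} = d_n = \gcd(\omega_0, \ldots, \omega_n)$. In particular $p_{n+1}$ divides $\omega_k$ for every $k$ with $0 \leq k \leq n$.

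Reducing $a$ modulo $p_{n+1}$ then collapses every term involving $\omega_0, \ldots, \omega_n$, leaving
\[
a \;\equiv\; \omega_{n+1} + 1 \pmod{p_{n+1}}.
\]
Hence $p_{n+1} \mid a$ is equivalent to $p_{n+1} \mid \omega_{n+1} + 1$, which is condition \ref{gor1}. Combining the two halves gives the corollary.

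Honestly this proof is mostly bookkeeping: the only nontrivial observation is that $p_{n+1}$ coincides with $\gcd(\omega_0,\ldots,\omega_n)$ thanks to the normalization $d_{n+1}=1$ built into the notion of an algebraic key sequence. The potential pitfall, which I would flag carefully, is to make sure this identification is stated cleanly before the modular reduction, since without it the equivalence of \ref{gor1} with $p_{n+1}\mid a$ is not at all transparent from the formula for $a$.
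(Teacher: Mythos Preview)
Your proof is correct and matches the paper's approach exactly: the paper states only that Corollary \ref{gorenstein-cor} is a ``straightforward application'' of Theorem \ref{canonical-thm}, and what you have written is precisely that straightforward application. The key observation that $p_{n+1}=d_n=\gcd(\omega_0,\ldots,\omega_n)$ (so that all terms of $a$ except $\omega_{n+1}+1$ vanish modulo $p_{n+1}$) is indeed the only point requiring comment, and you have handled it cleanly.
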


In the case that the anti-canonical divisor of $\xomegatheta$ is ample, a deeper examination of conditions \ref{gor1} and \ref{gor2} of Corollary \ref{gorenstein-cor} yields the following result which is originally due to \cite{brenton-singular} and \cite{brenton-graph-1}. We will use the following construction:

\begin{defn} \label{y-k}
For $1 \leq k \leq 8$, we now describe a procedure to construct a compactification $Y_k$ of $\cc^2$ via $n$ successive blow ups from $\pp^2$. We will denote by $E_k$, $1 \leq k \leq 8$, the $k$-th exceptional divisor on $Y_k$. Let $E_0$ be the line at infinity in $\pp^2$ and pick a point $O \in E_0$. Let $Y_1$ be the blow up of $\pp^2$ at $O$, and for $2 \leq k \leq 3$, let $Y_k$ be the blow up of $Y_{k-1}$ at the intersection of the strict transform of $E_0$ and $E_{k-1}$. Finally, for $3 \leq k \leq 7$, pick a point $O_k$ on $E_k$ which is not on the strict transform of any $E_j$, $0 \leq j \leq k - 1$, and define $Y_{k+1}$ to be the blow up of $Y_{k}$ at $O_k$.
\end{defn}

\begin{figure}[htp]
\begin{center}
\begin{tikzpicture}

\newcommand{\drawvertex}[6]{ 	
 	\fill[black] (#1, #2) circle (2pt);
 	\draw (#1, #2)  node [#5] {$#6$};
 	\draw (#1, #2)  node [#3] {$E_{#4}$};
}

\newcommand{\drawvertexplus}[6]{
	\draw[#1] (\x,\y) -- (\x+ #2,\y);
	\pgfmathsetmacro\x{\x + #2}
	\drawvertex{\x}{\y}{#3}{#4}{#5}{#6};	
}

\newcommand{\drawvertexplusy}[6]{
	\draw[#1] (\x,\y) -- (\x,\y- #2);
	\pgfmathsetmacro\y{\y - #2}
	\drawvertex{\x}{\y}{#3}{#4}{#5}{#6};	
}

 	\pgfmathsetmacro\edge{1}
 	\pgfmathsetmacro\vedge{.9}
 	\pgfmathsetmacro\dashededge{2}
 	\pgfmathsetmacro\dist{2}
 	\pgfmathsetmacro\captiony{-2*\vedge-0.75}
	\pgfmathsetmacro\captionx{0.5*\edge}

 	\pgfmathsetmacro\x{0}
 	\pgfmathsetmacro\y{0}
 	 	 	
 	\drawvertex{\x}{\y}{below}{0}{above}{0};
 	\drawvertexplus{thick}{\edge}{below}{1}{above}{-1};
 	\draw (\captionx, \captiony) node {(a) $k=1$};

 	\pgfmathsetmacro\x{\x+\dist}
 	\drawvertex{\x}{\y}{below}{0}{above}{-1};
 	\drawvertexplus{thick}{\vedge}{below right}{2}{above}{-1};
 	\drawvertexplusy{thick}{\vedge}{right}{1}{left}{-2};
 	\pgfmathsetmacro\y{0}
 	\pgfmathsetmacro\captionx{\captionx + \dist+\edge}
 	\draw (\captionx, \captiony) node {(b) $k=2$};
 	
 	\pgfmathsetmacro\x{\x+\dist}
 	\drawvertex{\x}{\y}{below}{0}{above}{-2};
 	\drawvertexplus{thick}{\edge}{below right}{3}{above}{-1};
 	\drawvertexplusy{thick}{\vedge}{right}{2}{left}{-2};
 	\drawvertexplusy{thick}{\vedge}{right}{1}{left}{-2};
 	\pgfmathsetmacro\y{0}
 	\pgfmathsetmacro\captionx{\captionx + \dist+\edge}
 	\draw (\captionx, \captiony) node {(c) $k=3$};
 	 	
 	\pgfmathsetmacro\x{\x+\dist}
 	\drawvertex{\x}{\y}{below}{0}{above}{-2};
 	\drawvertexplus{thick}{\edge}{below right}{3}{above}{-2};
 	\drawvertexplusy{thick}{\vedge}{right}{2}{left}{-2};
 	\drawvertexplusy{thick}{\vedge}{right}{1}{left}{-2};
 	\pgfmathsetmacro\y{0}
 	\drawvertexplus{thick, dashed}{\dashededge}{below}{k-1}{above}{-2};
 	\drawvertexplus{thick}{\edge}{below}{k}{above}{-1};
 	\pgfmathsetmacro\captionx{\captionx + \dist+2*\edge}
 	\draw (\captionx, \captiony) node {(d) $4\leq k \leq 8$};
 	
\end{tikzpicture}
\end{center}
\caption{Dual graph of curves at infinity on $Y_{k}$} \label{fig2}
\end{figure}

\begin{cor}[Gorenstein plus vanishing geometric genus, {\cite[Proposition 2]{brenton-singular}, \cite[Theorem 6]{brenton-graph-1}}] \label{neg-gorenstein}
Let $\bar X$ be a primitive Gorenstein compactification of $\cc^2$. Then the following are equivalent :
\begin{enumerate}[(i)]
\item the geometric genus $p_g(\bar X)$ of $\bar X$ is zero,
\item each singular point of $\bar X$ is a rational double point, and
\item the canonical bundle $K_{\bar X}$ of $\bar X$ is anti-ample. 
\end{enumerate} 
If any of these holds then one of the following holds:
\begin{enumerate}
\item $\bar X \cong \pp^2$,
\item $\bar X$ is the singular quadric hypersurface $x^2+y^2+z^2=0$ in $\pp^3$, or
\item $\bar X$ is obtained from some $Y_k$ (from Definition \ref{y-k}), $3 \leq k \leq 8$, by contracting the strict transforms of all $E_j$ for $0 \leq j < k$. 
\end{enumerate}  
In particular, if $\bar X$ is singular, then the dual curve for the resolution of singularities of $\bar X$ is one of the Dynkin diagrams $A_1$, $A_1 + A_2$, $A_4$, $E_5$, $E_6$, $E_7$ or $E_8$ (with the weight of each vertex being $-2$).  
\end{cor}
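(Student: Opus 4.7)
The plan is to first establish (i)$\iff$(ii)$\iff$(iii) and then the classification. Since $\bar X$ is rational (being bimeromorphic to $\pp^2$) we have $p_a(\bar X) = 0$, so the standard decomposition
\[
p_g(\bar X) = p_a(\bar X) + \sum_{P \in \sing(\bar X)} p_g(\scrO_{\bar X, P})
\]
shows $p_g(\bar X) = 0$ iff every singularity is rational; combined with the classical Artin--Du Val theorem that a Gorenstein rational surface singularity is a rational double point, this yields (i)$\iff$(ii). For (iii), the exact sequence of class groups together with $\cl(\cc^2) = 0$ and the irreducibility of $C_\infty$ (primitivity) gives that $\cl(\bar X)$ is generated by $[C_\infty]$, and projectivity of $\bar X$ (Theorem \ref{projective-embedding}) forces $[C_\infty]^2 > 0$, hence $[C_\infty]$ is $\qq$-ample. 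Theorem \ref{canonical-thm} writes $-K_{\bar X} = \epsilon\,[C_\infty]$ where $\epsilon := \omega_0 + \omega_{n+1} + 1 - \sum_{k=1}^n (p_k-1)\omega_k$, and since $K_{\bar X}$ is Cartier by the Gorenstein hypothesis, $-K_{\bar X}$ is ample iff $\epsilon > 0$; by Corollary \ref{singularity-cor}(\ref{rational-assertion}) this is (ii).

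For the classification I apply Theorem \ref{projective-embedding} to write $\bar X = \xomegatheta$ with $\vec\omega$ an algebraic key sequence, taken essential (which changes neither $\epsilon$ nor $\bar X$). Combining Corollary \ref{gorenstein-cor} ($p_{n+1} \mid \omega_{n+1}+1$ and $\omega_{n+1}\mid \epsilon$) with $\epsilon > 0$ gives $\epsilon \geq \omega_{n+1}$, i.e.\ the key inequality
\[
\omega_0 + 1 \ \geq\ \sum_{k=1}^n (p_k - 1)\omega_k .
\]
When $n = 0$ this reduces to $\omega_0 \mid \omega_1 + 1$ and $\omega_1 \mid \omega_0 + 1$ with $\gcd(\omega_0,\omega_1) = 1$; a short divisibility analysis yields $(\omega_0, \omega_1)$ (up to swap) equal to $(1,1)$, $(1,2)$, or $(2,3)$, giving $\pp^2$, the singular quadric $\pp(1,1,2) \cong V(x^2+y^2+z^2) \subset \pp^3$, and $\pp(1,2,3)$; the last has singularities $A_1$ and $\tfrac{1}{3}(1,2) = A_2$ and so is identified with the contraction of $Y_3$.

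For $n \geq 1$, the essentiality bound $p_k \geq 2$ together with the semigroup condition $p_k\omega_k \in \zz_{\geq 0}\langle \omega_0,\ldots,\omega_{k-1}\rangle$ force the $\omega_k$ to grow rapidly, so the inequality above admits only a short explicit list of solutions, enumerated by direct computation. For each surviving $\vec\omega$ I identify $\xomegatheta$ with a contraction of some $Y_k$, $4 \leq k \leq 8$, by comparing dual graphs of minimal resolutions: the cyclic quotient at $P_0$ of type $\tfrac{1}{\tilde\omega}(1,\omega_{n+1})$ is resolved by its Hirzebruch--Jung chain, the resolution at $P_\infty$ is read off from the Gorenstein/key data, and the combined configuration matches one of those in Figure \ref{fig2}. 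In each case the contracted divisor is a configuration of $(-2)$-curves whose dual graph is one of $A_4$, $D_5$ (denoted $E_5$ in the statement), $E_6$, $E_7$, $E_8$, completing the classification. The main obstacle is precisely this $n \geq 1$ enumeration-plus-matching step: ruling out spurious algebraic key sequences satisfying the inequality and the two Gorenstein conditions, and then verifying graph-by-graph that each survivor is the contraction of the expected $Y_k$.
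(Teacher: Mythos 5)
Your overall route is the one this survey itself points to (the paper gives no proof, attributing the result to Brenton and noting that it follows from ``a deeper examination'' of Corollary \ref{gorenstein-cor} together with Theorem \ref{canonical-thm} and Corollary \ref{singularity-cor}), and your reductions $\epsilon>0$, $\omega_{n+1}\mid\epsilon$, hence $\omega_0+1\geq\sum_{k=1}^n(p_k-1)\omega_k$, as well as the complete $n=0$ analysis, are correct. But there are genuine gaps. First, before you may invoke Theorem \ref{projective-embedding} or Theorem \ref{canonical-thm} you must dispose of the non-algebraic case: ``primitive Gorenstein'' does not imply algebraic (the surface $\bar X_{2,8}$ of Example \ref{non-example} is a primitive Gorenstein, minimally elliptic, non-algebraic compactification, cf.\ Remark \ref{simplest-sing}). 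The equivalence survives because each of (i), (ii), (iii) separately forces algebraicity ((ii) via Artin's rationality criterion, (iii) via projectivity, (i) because $p_g=0$ forces all singularities rational), but this step is needed and is absent from your argument, which silently assumes projectivity when producing $[C_\infty]^2>0$.

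Second, your parenthetical claim that passing to the essential subsequence ``changes neither $\epsilon$ nor $\bar X$'' is false in its second half: the essential subsequence determines $\epsilon$, the Gorenstein conditions of Corollary \ref{gorenstein-cor} and the resolution dual graph, but not the isomorphism class of $\bar X$ — that is precisely why the moduli spaces in Theorem \ref{moduli-thm} are indexed by all $\vec\omega'$ with a fixed essential subsequence and are positive-dimensional. This can be repaired (the enumeration of admissible essential key sequences is legitimate, since the conditions you impose depend only on essential data), but then the identification of the \emph{actual} surface $\bar X$ with a contraction of some $Y_k$ of Definition \ref{y-k} must be carried out on the minimal good resolution of $\bar X$ itself, reconstructing the blow-up structure over $\pp^2$ from the configuration of the full curve at infinity, not merely by matching the ADE graph of the contracted part. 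Third, and most importantly, the two steps you yourself flag — the finite enumeration of essential algebraic key sequences with $n\geq 1$ satisfying $\omega_0+1\geq\sum(p_k-1)\omega_k$ together with conditions \ref{gor1}--\ref{gor2} of Corollary \ref{gorenstein-cor}, and the case-by-case identification with the $Y_k$, $4\leq k\leq 8$ — are the actual content of the classification and are only asserted, not performed; as it stands the proposal is an outline of the intended derivation rather than a proof.
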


Miyanishi and Zhang in \cite{miyanishi-zhang} proved a converse to Corollary \ref{neg-gorenstein}. Recall that a surface $S$ is called {\em log del Pezzo} if $S$ has only quotient singularities and the anticanonical divisor $-K_S$ is ample.

\begin{thm}[{\cite[Theorem 1]{miyanishi-zhang}}] \label{converse-graph-0}
Let $S$ be a Gorenstein log del Pezzo surface of rank one. Then $S$ is a compactification of $\cc^2$ iff the dual curve for the resolution of singularities of $\bar X$ is one of the Dynkin diagrams $A_1$, $A_1 + A_2$, $A_4$, $E_5$, $E_6$, $E_7$ or $E_8$.
\end{thm}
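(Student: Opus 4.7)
The plan is to treat the two directions separately. The forward direction follows from Corollary \ref{neg-gorenstein}: if $S$ compactifies $\cc^2$, then $\mathrm{Cl}(\cc^2) = 0$ combined with $\mathrm{rank}\,\mathrm{Cl}(S) = 1$ forces $C_\infty := S \setminus \cc^2$ to be irreducible (its components generate $\mathrm{Cl}(S)$, and any nontrivial $\qq$-linear relation among them would produce a non-constant unit in $\cc[\cc^2]$, a contradiction), so $S$ is primitive. Since $S$ is a Gorenstein log del Pezzo, its singularities are Gorenstein quotient singularities, hence rational double points, whence $p_g(S) = 0$; and $-K_S$ is ample by hypothesis. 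All hypotheses of Corollary \ref{neg-gorenstein} are then met, and its conclusion produces exactly the listed Dynkin graphs.

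For the converse, let $\pi \colon \tilde S \to S$ be the minimal resolution, with exceptional divisor $E = \sum E_i$ a tree of smooth $(-2)$-curves realising the prescribed $\Gamma$. Since ADE singularities are Gorenstein rational, $K_{\tilde S} = \pi^* K_S$, so $-K_{\tilde S}$ is nef and big, making $\tilde S$ a weak del Pezzo surface. Combining $\rho(\tilde S) = \rho(S) + r(\Gamma) = 1 + r(\Gamma)$ with $\rho(\tilde S) = 10 - K_{\tilde S}^2$ (for smooth rational surfaces) pins down the degree $d := K_{\tilde S}^2 = 9 - r(\Gamma)$.

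The heart of the argument is to show that $S$ is isomorphic to the specific compactification $X_\Gamma$ of $\cc^2$ produced by Corollary \ref{neg-gorenstein} from the $Y_k$ of Definition \ref{y-k}; once this is done, $S$ is automatically a compactification of $\cc^2$. I would establish this by proving uniqueness (up to isomorphism) of a weak del Pezzo surface of degree $d$ carrying a $(-2)$-configuration of type $\Gamma$. Writing $\tilde S$ as a blow-up of $\pp^2$ (or of $\pp^1 \times \pp^1$ when $d = 8$) at $9 - d$ possibly infinitely near points, the prescribed configuration translates into classical incidence conditions on the centres --- two coinciding or infinitely near, three collinear, six on a conic, and so on --- which a case-by-case analysis shows determine the centres up to projective equivalence. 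Contracting $E$ then gives $S \cong X_\Gamma$.

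The main obstacle is this uniqueness step, most delicate for $\Gamma = E_7, E_8$ (degrees $2$ and $1$), where the point configurations are extremely restrictive. A cleaner, lattice-theoretic alternative is to argue inside $K_{\tilde S}^\perp \subset \mathrm{Pic}(\tilde S)$: the classes $[E_i]$ span a root sublattice of type $\Gamma$ inside the ambient del Pezzo root lattice of rank $9 - d$, and uniqueness of its primitive embedding up to the Weyl group action --- realised geometrically via Cremona-type birational modifications between degree-$d$ weak del Pezzos --- yields $\tilde S \cong \tilde X_\Gamma$, and hence $S \cong X_\Gamma$ after contracting $E$.
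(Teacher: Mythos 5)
The paper itself gives no proof of this statement (it is quoted directly from Miyanishi--Zhang), so only your argument can be assessed. Your forward direction is fine: rank one together with the ``units of $\cc[\cc^2]$'' argument forces $C_\infty$ to be irreducible, two-dimensional Gorenstein quotient singularities are exactly the rational double points, and Corollary \ref{neg-gorenstein} then yields the list of dual graphs.

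The converse, however, rests on a uniqueness claim that is false, and this is a genuine gap. A Gorenstein log del Pezzo surface of rank one is \emph{not} determined up to isomorphism by its singularity type, even within the listed types: for $E_8$ (degree $1$) the two sextic hypersurfaces $S_0=\{w^2=z^3+x^5y\}$ and $S_1=\{w^2=z^3+x^4z+x^5y\}$ in $\pp(1,1,2,3)$ are both Gorenstein log del Pezzo surfaces of rank one whose unique singular point (at $[0:1:0:0]$) is an $E_8$ point, and they are non-isomorphic: the Weierstrass form $w^2=z^3+zf_4(x,y)+f_6(x,y)$ of a degree-one Gorenstein del Pezzo is canonical up to linear substitutions in $(x,y)$ and scalings of $z,w$, and $f_4\equiv 0$ for $S_0$ while $f_4=x^4\not\equiv 0$ for $S_1$. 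Hence the ``heart of the argument'' --- identifying $S$ with a single model $X_\Gamma$ built from $Y_k$ --- cannot work for $E_8$; note also that Definition \ref{y-k} itself depends on choices of the centres $O_k$, so it does not even single out one surface. The lattice-theoretic alternative fails for the same reason: uniqueness of the primitive embedding of the root lattice up to the Weyl group controls only the combinatorial data, and weak del Pezzo surfaces with identical Picard lattice and $(-2)$-configuration can be non-isomorphic (they have moduli), so one cannot conclude $\tilde S\cong \tilde X_\Gamma$. The theorem itself survives --- indeed both $S_0$ and $S_1$ are compactifications of $\cc^2$, since removing the irreducible curve $S_i\cap\{x=0\}$ leaves $\{\tilde w^2=\tilde z^3+c\tilde z+\tilde y\}\cong\cc^2$ --- but this shows the correct strategy is the one of Miyanishi--Zhang: prove directly, type by type, that \emph{every} rank-one Gorenstein log del Pezzo surface with the given dual graph contains $\cc^2$ as the complement of an irreducible curve (e.g.\ via explicit anticanonical models or the structure of the smooth locus), rather than via reduction to a unique point configuration in $\pp^2$.
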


In the same article Miyanishi and Zhang give a topological characterization of primitive Gorenstein compactification of $\cc^2$ with vanishing geometric genus:

\begin{thm}[{\cite[Theorem 2]{miyanishi-zhang}}] \label{converse-simply-connected-0}
Let $S$ be a Gorenstein log del Pezzo surface. Suppose that either $S$ is singular or that there are no $(- 1)$-curves contained in the smooth locus of $S$. Then $S$ is a compactification of $\cc^2$ iff the smooth locus of $S$ is simply connected. 
\end{thm}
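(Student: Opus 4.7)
The plan is to handle both directions separately.

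\textbf{Forward direction.} If $S$ is a compactification of $\cc^2$, then $\cc^2 \subseteq S^{sm}$ and the complement $S^{sm} \setminus \cc^2 \subseteq C_\infty$ is a complex analytic subset of real codimension $\geq 2$ in $S^{sm}$. The standard fact that loops and homotopies can be perturbed off a real-codimension-$2$ analytic subset yields a surjection $\pi_1(\cc^2) \twoheadrightarrow \pi_1(S^{sm})$, giving $\pi_1(S^{sm}) = 1$.

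\textbf{Reverse direction.} Assume $\pi_1(S^{sm}) = 1$. If $S$ is smooth, then by hypothesis $S$ has no $(-1)$-curves, so the classification of smooth del Pezzo surfaces forces $S \cong \pp^2$ or $S \cong \pp^1 \times \pp^1$; both are standard compactifications of $\cc^2$. Henceforth assume $S$ is singular. Let $\pi\colon\tilde S \to S$ be the minimal resolution, so $\tilde S$ is a smooth weak del Pezzo and $E := \pi^{-1}(\sing S)$ is a disjoint union of Dynkin configurations of $(-2)$-curves; since $\pi$ is an isomorphism off $E$, one has $\pi_1(\tilde S \setminus E) = \pi_1(S^{sm}) = 1$.

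\textbf{Reduction to $\rho = 1$.} If $\tilde S$ contains a $(-1)$-curve $\tilde F$ disjoint from $E$, its image $F \subset S^{sm}$ is a $(-1)$-curve in the smooth locus; contracting $F$ via $f\colon S \to S'$ yields a new Gorenstein log del Pezzo (Gorenstein-ness follows from $K_S = f^*K_{S'} + F$ with $F$ Cartier; ampleness of $-K_{S'}$ follows from Nakai--Moishezon, since $(-K_{S'})^2 = K_S^2 + 1 > 0$ and $-K_{S'} \cdot C = -K_S \cdot \tilde C + F \cdot \tilde C > 0$ for every irreducible $C \subset S'$ with strict transform $\tilde C$), still singular, and with $\pi_1({S'}^{sm}) = 1$ since collapsing the simply-connected $2$-sphere $F$ preserves $\pi_1$. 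Iterating, one arrives at a singular Gorenstein log del Pezzo $S_0$ with $\pi_1(S_0^{sm}) = 1$ and with no $(-1)$-curve disjoint from the exceptional locus of its minimal resolution; one then shows $\rho(S_0) = 1$ and applies Theorem \ref{converse-graph-0} to conclude that $S_0$ is a compactification of $\cc^2$. Finally, each $(-1)$-curve contracted along the way must map into $C_\infty(S_0)$ (since $\cc^2 \subset S_0$ contains no compact curves), so blowing these points back up enlarges $C_\infty$ while preserving the open $\cc^2$, and $S$ is also a compactification of $\cc^2$.

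\textbf{Main obstacle.} The hardest step is the claim $\rho(S_0) = 1$. The absence of $(-1)$-curves disjoint from the exceptional locus rules out one family of extremal rays on the minimal resolution of $S_0$, but excluding extremal rays of Mori-fibration type requires an essential use of $\pi_1(S_0^{sm}) = 1$ --- for instance, via a Van Kampen analysis of the topology of fibers of any putative ruling, or by invoking the classification of minimal open simply-connected smooth algebraic surfaces.
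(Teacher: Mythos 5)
First, note that the paper itself contains no proof of Theorem \ref{converse-simply-connected-0}: it is a survey statement quoted from Miyanishi--Zhang, so your argument can only be judged on its own internal soundness. The forward direction and the smooth case are fine, and the contraction step (Gorenstein-ness, ampleness via Nakai--Moishezon, preservation of $\pi_1$ of the smooth locus) is correctly argued. But the reverse direction has genuine gaps beyond the one you flag. The most serious one you do not acknowledge: even granting $\rho(S_0)=1$, Theorem \ref{converse-graph-0} does \emph{not} say that every rank-one Gorenstein log del Pezzo is a compactification of $\cc^2$ --- its hypothesis requires the resolution graph to be one of $A_1$, $A_1+A_2$, $A_4$, $E_5$, $E_6$, $E_7$, $E_8$. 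There are rank-one Gorenstein log del Pezzo surfaces with other singularity types (e.g.\ the cubic surface with $3A_2$ singularities, whose smooth locus has fundamental group $\zz/3\zz$), so to invoke Theorem \ref{converse-graph-0} you must first convert the hypothesis $\pi_1(S_0^{\mathrm{sm}})=1$ into the statement that the Dynkin type of $S_0$ lies in that list. This translation --- essentially the computation of $\pi_1$ of the smooth locus for all rank-one Gorenstein log del Pezzo types --- is the real content of Miyanishi--Zhang's theorem, and your proposal does not address it at all. Together with the admitted gap $\rho(S_0)=1$ (which is itself a nontrivial structural fact, not a formality), the core of the proof is missing.

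There is also a flaw in your final lifting step. You assert that each contracted $(-1)$-curve ``must map into $C_\infty(S_0)$ since $\cc^2\subset S_0$ contains no compact curves,'' but the contracted curves map to \emph{points} of $S_0$, and nothing in that reason prevents such a point from lying in the open $\cc^2$. Indeed it can: blowing up $\pp^2$ at a point of a chosen $\cc^2$ gives $\mathbb{F}_1$, which is still a compactification of $\cc^2$ only because one may re-choose the boundary (a section plus a fiber through the new exceptional curve), not because the center was forced to lie at infinity. So the statement you actually need --- that if $S\to S_0$ is a composition of blow-ups at smooth points with $S$ still a singular Gorenstein log del Pezzo with simply connected smooth locus, then the property of being a compactification of $\cc^2$ lifts from $S_0$ to $S$ --- requires a genuine argument (re-choosing the curve at infinity, or running the reduction so that the centers are controlled), and as written the justification is a non sequitur. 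In summary: the skeleton (resolve, contract $(-1)$-curves off the exceptional locus, reduce to rank one, quote the rank-one classification, lift back) is reasonable, but steps (i) $\rho(S_0)=1$, (ii) ``simply connected smooth locus $\Rightarrow$ admissible Dynkin type,'' and (iii) the lifting step are precisely where the theorem's difficulty lives, and none of them is established.
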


From Theorem \ref{canonical-thm} and the classification of dual graph of resolution of singularities of primitive compactifications discussed in Section \ref{dual-subsection}, it is possible to obtain classifications of primitive compactifications with ample anti-canonical divisors and log terminal and log canonical singularities obtained originally by Kojima \cite{kojima} and Kojima and Takahashi \cite{koji-hashi}. Both of these classifications consist of explicit lists of dual graphs of resolution of singularities, and we omit their statements. However, they also prove converse results in the spirit of Theorems \ref{converse-graph-0} and \ref{converse-simply-connected-0}.         

\begin{thm}[{\cite[Theorem 0.1]{kojima}}]
Let $S$ be a log del Pezzo surface of rank one. Assume that the singularity type of $S$ is one of the possible choices (listed in \cite[Appendix C]{kojima}) for the singularity type of primitive compactifications of $\cc^2$ with at most quotient singularities. If $\ind(S) \leq 3$, then $S$ is a primitive compactification of $\cc^{2}$.
\end{thm}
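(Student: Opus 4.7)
The plan is to reconstruct $S$ as a primitive compactification of $\cc^{2}$ by passing to its minimal resolution $\pi : \tilde S \to S$ and matching the resulting configuration against the dual graphs produced by primitive compactifications via Theorem \ref{projective-embedding}. Let $E = \bigcup_{i} E_i$ be the exceptional divisor of $\pi$. The singularity-type hypothesis fixes the weighted dual graph of $E$: for each quotient singular point, the self-intersections of the corresponding $E_i$ are prescribed by the Hirzebruch--Jung continued-fraction expansions encoded in the singularity type. The rank-one hypothesis gives $\rho(\tilde S) = 1 + \#\{E_i\}$, so in $\Pic(\tilde S) \otimes \qq$ there is, modulo the span of the $E_i$, a single extra class; choose an irreducible representative $\tilde \Gamma \subset \tilde S$ for it.

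Next I would combine the ampleness of $-K_S$ with the integrality coming from $\ind(S) \leq 3$. Writing $K_{\tilde S} = \pi^{*}K_S - \sum a_i E_i$, the discrepancies $a_i$ are uniquely determined by the system $(K_{\tilde S} + \sum a_i E_i) \cdot E_j = 0$ for all $j$. The index bound forces $3 \sum a_i E_i$ to be an integral divisor, which in turn strongly restricts both the self-intersections of the components of $E$ and the intersection data of $\tilde\Gamma$. Ampleness of $-K_S$ imposes $(-K_{\tilde S}) \cdot C > 0$ for every irreducible curve $C \not\subset E$, in particular for $\tilde \Gamma$. Running through Kojima's Appendix~C list, one extracts a finite collection of admissible triples $(\tilde S, E, \tilde\Gamma)$. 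For each of these, the weighted dual graph of $E \cup \tilde\Gamma$ is shown to coincide with the dual graph of $\pi^{-1}(C_\infty)$ for a primitive compactification $\bar X$ of $\cc^{2}$ obtained from the explicit key-sequence construction of Theorem \ref{projective-embedding}. By Morrow's classification of non-singular compactifications of $\cc^{2}$ (applied after contracting any superfluous $(-1)$-curves outside $E \cup \tilde\Gamma$), the surface $\tilde S$ is forced to be the minimal resolution of such a $\bar X$, and under this identification $\tilde S \setminus (E \cup \tilde\Gamma) \cong \cc^{2}$. Contracting $E$ via $\pi$ then exhibits $S \cong \bar X$ as a primitive compactification with curve at infinity $\pi(\tilde\Gamma)$.

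The hardest step is the case-by-case matching in the second paragraph: Kojima's Appendix~C contains many entries, and for each one must rule out the possibility of a log del Pezzo surface of rank one sharing the same singularity type and index but failing to be a primitive compactification of $\cc^{2}$. Concretely, for each candidate dual graph of $E \cup \tilde\Gamma$ one has to exclude alternative rational-surface structures on $\tilde S$ for which the open complement $\tilde S \setminus (E \cup \tilde\Gamma)$ has logarithmic Kodaira dimension strictly greater than $-\infty$, or equals an affine surface other than $\cc^{2}$. Once this rigidity is established, typically by computing $\bar\kappa$ of the complement and invoking the uniqueness side of Morrow's classification of minimal SNC-compactifications of $\cc^{2}$, the theorem follows. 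The role of the bound $\ind(S) \leq 3$ is precisely to keep the list of numerically admissible configurations short enough that this finite check is tractable; without it, the discrepancy denominators can grow and more exotic configurations appear that cannot be eliminated by numerical arguments alone.
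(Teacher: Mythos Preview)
The paper does not contain a proof of this statement: it is quoted verbatim as \cite[Theorem 0.1]{kojima} and presented purely as a converse result from the literature, with no argument supplied. So there is nothing in the paper to compare your proposal against.

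As for your sketch itself, the broad outline --- pass to the minimal resolution, use the rank-one condition and the index bound to pin down the intersection data of the exceptional configuration together with an extra curve $\tilde\Gamma$, then match the resulting weighted dual graph against those arising from primitive compactifications --- is indeed the shape of Kojima's original argument. But two points deserve caution. First, the identification of $\tilde S \setminus (E \cup \tilde\Gamma)$ with $\cc^{2}$ is not obtained by invoking Morrow's classification in the way you describe; rather, one typically contracts a carefully chosen sequence of $(-1)$-curves in $\tilde S$ to reach $\pp^{2}$ or a Hirzebruch surface, tracking the image of $E \cup \tilde\Gamma$ at each step, and this is exactly the delicate case-by-case work you flag. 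Second, the existence of an irreducible curve $\tilde\Gamma$ generating $\Pic(\tilde S)\otimes\qq$ modulo the $E_i$, with the precise intersection numbers needed, is not automatic from $\rho(S)=1$ alone and has to be extracted from the structure theory of log del Pezzo surfaces of small index. Your proposal acknowledges the first difficulty but glosses over the second; neither is addressed in the present paper, which simply defers to Kojima.
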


\begin{thm}[{\cite[Theorem 1.2]{koji-hashi}}]
Let $S$ be a numerical del Pezzo surface (i.e.\ the intersection of the anti-canonical divisor of $S$ with itself and every irreducible curve on $S$ is positive) with at most rational singularities. Assume the singularity type of $S$ is one of the possible choices (listed in \cite{koji-hashi}) for the singularity type of primitive numerical del Pezzo compactifications of $\cc^2$ with rational singularities. Then $S$ is a primitive compactification of $\cc^{2}$.
\end{thm}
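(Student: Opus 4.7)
The plan is to reverse-engineer a primitive compactification structure from the singularity data. I would start by passing to the minimal resolution $\pi\colon \tilde S \to S$. Since $S$ has only rational singularities and is a numerical del Pezzo, $\tilde S$ is a smooth projective rational surface and the exceptional divisor $E = \bigcup_i E_i$ is a tree of smooth rational curves whose dual graph $\mathcal G$ (by hypothesis) occurs as the dual graph of the exceptional divisor of the minimal resolution of some genuine primitive numerical del Pezzo compactification $\bar X^\star$ of $\cc^2$ with rational singularities. On the resolution $\tilde X^\star$ of $\bar X^\star$ the strict transform $\Gamma^\star$ of the curve at infinity has known intersection numbers with the components $E^\star_i$ of the exceptional divisor and a known self-intersection, all determined by $\mathcal G$ and the (also classified) dual graph of the total transform of the curve at infinity.

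The central step is to produce a smooth rational curve $\Gamma$ on $\tilde S$ whose intersection numbers with each $E_i$ and self-intersection match those of $\Gamma^\star$ with $E^\star_i$ and $(\Gamma^\star)^2$. I would first transport the numerical class via the identification of intersection lattices provided by $\mathcal G$, obtaining a divisor class $D$ on $\tilde S$. I would then apply Riemann--Roch on the smooth rational surface $\tilde S$ together with Kawamata--Viehweg vanishing applied to $D-K_{\tilde S}$; the required positivity is supplied by the numerical del Pezzo hypothesis, which makes $-K_S$ strictly positive against every curve on $S$ and, via $\pi^*$, nef-plus-exceptional on $\tilde S$. This yields non-emptiness of $|D|$; irreducibility and smoothness of a general member follow from the prescribed intersection pattern, which forbids $D$ from splitting off components of $E$.

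Given such a $\Gamma$, I would exhibit an explicit sequence of contractions of $(-1)$-curves in $\Gamma \cup E$ that transforms $\tilde S$ into $\pp^2$ with $\Gamma \cup E$ collapsing to the line at infinity. The dual-graph data (together with the prescribed intersections of $\Gamma$) guarantees at each stage the existence of a $(-1)$-curve in the configuration whose contraction preserves the property of being a curve-at-infinity configuration, so the sequence can be run in parallel with the one for $\tilde X^\star \to \pp^2$. Since $\tilde S \setminus (\Gamma \cup E)$ is preserved by these contractions, we obtain $\tilde S \setminus (\Gamma \cup E) \cong \pp^2 \setminus L_\infty \cong \cc^2$; pushing down along $\pi$ gives $S \setminus \pi(\Gamma) \cong \cc^2$ with $\pi(\Gamma)$ irreducible, proving that $S$ is a primitive compactification.

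The main obstacle is the existence and irreducibility of $\Gamma$. The numerical data from $\mathcal G$ alone does not automatically produce an actual smooth rational curve on $\tilde S$ of the right class; in principle $\tilde S$ could carry the prescribed configuration $E$ but admit no complementary $\Gamma$. Overcoming this relies on the very restrictive nature of the admissible list in \cite{koji-hashi}, which must be consulted graph-by-graph: for each entry one must identify an explicit nef divisor (typically a small rational combination of $-K_{\tilde S}$ and components of $E$) whose linear system visibly contains the desired $\Gamma$, and check that the minimal model program on $\tilde S$ produces a contraction compatible with the model $\tilde X^\star$. The finiteness of the list makes this a bounded, if tedious, case-by-case verification, and it is precisely this finite-case structure that turns the numerical classification into a genuine characterization.
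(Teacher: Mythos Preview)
The paper does not contain a proof of this statement: it is quoted verbatim as \cite[Theorem 1.2]{koji-hashi} in a survey context, with no argument given. So there is no ``paper's own proof'' to compare your proposal against. Your sketch should be measured against the original article of Kojima and Takahashi rather than against anything here.

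That said, a brief comment on the proposal itself. The overall architecture --- pass to the minimal resolution, manufacture the missing irreducible rational curve $\Gamma$ in the correct numerical class, and then run the blow-down sequence in parallel with a known model to reach $\pp^2$ --- is exactly the shape such converse theorems take (compare the Miyanishi--Zhang argument for the Gorenstein case cited just above in the paper). Where your write-up is optimistic is the ``central step.'' Kawamata--Viehweg vanishing for $D - K_{\tilde S}$ requires $D - K_{\tilde S}$ to be nef and big, and the numerical del Pezzo hypothesis on $S$ only controls $\pi^*(-K_S)$; the discrepancy term and the class $D$ itself have to be handled separately, and there is no uniform reason why the required positivity holds for an arbitrary entry on the list. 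Likewise, ``irreducibility and smoothness of a general member follow from the prescribed intersection pattern'' is not automatic: a class meeting each $E_i$ correctly can still be represented by a reducible curve containing components disjoint from $E$. You acknowledge this at the end by falling back on a graph-by-graph check, which is honest, but then the earlier paragraph promising a uniform Riemann--Roch/vanishing argument is not really doing work. In practice the published proofs of results of this type proceed case by case from the outset, and that is what one should expect here as well.
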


From a slightly different perspective, Furushima \cite{furushima} and Ohta \cite{ohta} studied primitive compactifications of $\cc^2$ which are hypersurfaces in $\pp^3$. The following was conjectured and proved for $d \leq 4$ by Furushima, and then proved in the general case by Ohta: 

\begin{thm}[{\cite{furushima}, \cite{furushima-errata}, \cite{ohta}}]\label{furujecture}
Let $\bar X_d$ be a minimal compactification of $\cc^2$ which is a hypersurface of degree $d \geq 2$ in $\pp^3$ and $C_d := \bar X_d \setminus \cc^2$ be the curve at infinity. Assume $\bar X_d$ has a singular point $P$ of multiplicity $d-1$. Then
\begin{enumerate}
\item $P$ is the unique singular point of $\bar X_d$ and the geometric genus of $P$ is $p_g(P) = (d - 1)( d - 2)(d - 3 )/6$. 
\item $C_d$ is a line on $\pp^3$.
\item $(\bar X_d, C_d) \cong (V_d, L_d)$ (up to a linear change of coordinates), where 
\begin{align*}
V_d &:= \{[z_0:z_1:z_2:z_3] \in \pp^3: z_0^d = z_1^{d-1}z_2 + z_2^{d-1}z_3\}\\
L_d &:= \{z_0= z_2 = 0\}.
\end{align*}
\end{enumerate}
\end{thm}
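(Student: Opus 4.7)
The plan is to project from $P$. Choose coordinates on $\pp^3$ so that $P = [0{:}0{:}0{:}1]$; the multiplicity-$(d{-}1)$ hypothesis then forces the defining polynomial to take the form
\[
F \;=\; z_3\,F_{d-1}(z_0, z_1, z_2) + F_d(z_0, z_1, z_2),
\]
with $F_{d-1}, F_d$ homogeneous of the indicated degrees and $F_{d-1}\not\equiv 0$. The projection $\pi : \bar X_d \dashrightarrow \pp^2$, $[z_0{:}z_1{:}z_2{:}z_3] \mapsto [z_0{:}z_1{:}z_2]$, is birational with inverse $[z_0{:}z_1{:}z_2] \mapsto [z_0{:}z_1{:}z_2{:}-F_d/F_{d-1}]$; it is resolved by the blowup $\sigma : \tilde X_d \to \bar X_d$ at $P$, giving a morphism $\tilde\pi : \tilde X_d \to \pp^2$. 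The lines through $P$ contained in $\bar X_d$ are precisely those in directions of $V(F_{d-1}) \cap V(F_d) \subset \pp^2$, and together with the exceptional divisor of $\sigma$ these are the only curves contracted by $\tilde\pi$.

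The main step is to show $C_d$ is an irreducible line. Since $\bar X_d\setminus C_d\cong \cc^2$ contains no complete curves, every irreducible component of a plane section of $\bar X_d$ not contained in $C_d$ must meet $C_d$. By minimality and Proposition \ref{curve-at-infinity-special}, all components of $C_d$ pass through a single point $P_\infty$, and the tree structure of Proposition \ref{curve-at-infinity-1}, combined with Picard-rank constraints for a hypersurface in $\pp^3$, forces $C_d$ to be irreducible. The generic fiber of $\tilde\pi$ being a single point then yields $\deg_{\pp^3}(C_d) = 1$, proving (2). After choosing coordinates so $C_d = \{z_0 = z_2 = 0\}$, the ``no complete curves in $\cc^2$'' principle also forces $\bar X_d \cap \{z_2 = 0\} \subseteq C_d$ set-theoretically, so $F|_{z_2=0}$ is a scalar multiple of $z_0^d$ and $F = z_0^d + z_2\,G$ for some $G$ homogeneous of degree $d-1$. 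The multiplicity-$(d{-}1)$ condition at $P$ reduces $G$ to $z_3\,G_{d-2}(z_0, z_1, z_2) + G_{d-1}(z_0, z_1, z_2)$; the requirement that $\bar X_d \cap \{z_2 \neq 0\} = \bar X_d\setminus C_d \cong \cc^2$ then forces $G_{d-2}(z_0, z_1, 1)$ to be a nonzero constant---otherwise, in the affine equation linear in $z_3$, the coefficient $G_{d-2}$ would have a zero set along which $z_3$ acquires a pole, contradicting the regularity of $z_3$ as a function on $\cc^2$---so $G_{d-2}$ is a scalar multiple of $z_2^{d-2}$. A linear change of coordinates in $(z_1, z_3)$ fixing $C_d$ and $P$ then brings $G_{d-1}$ to $-z_1^{d-1}$, completing (3).

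With the normal form $F = z_0^d - z_1^{d-1}z_2 - z_2^{d-1}z_3$ in hand, (1) follows: for $d\geq 3$ the Jacobian of $F$ vanishes only at $z_0 = z_1 = z_2 = 0$, giving uniqueness of $P$. The genus formula $p_g(P) = (d-1)(d-2)(d-3)/6 = \binom{d-1}{3}$ is then computed by resolving the singularity (the tangent cone $z_2^{d-1}$ being a $(d{-}1)$-fold line, the resolution consists of a chain of infinitely near blow-ups of a very explicit shape) and applying Laufer's formula relating $p_g$ to the arithmetic genus of the fundamental cycle on the resolution graph.

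The main obstacle is the irreducibility-and-degree-one statement for $C_d$. Ruling out reducible configurations, or $C_d$ of $\pp^3$-degree greater than one, demands a careful interplay between the minimality hypothesis, the tree structure from Proposition \ref{curve-at-infinity-1}, and the concrete constraints of a hypersurface embedding in $\pp^3$; this is the heart of Furushima's and Ohta's original arguments, and everything afterwards is essentially normal-form bookkeeping.
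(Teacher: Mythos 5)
There is a genuine gap, and it sits exactly where the theorem's content lies. This statement is quoted in the survey from \cite{furushima}, \cite{furushima-errata} and \cite{ohta} without proof, so the comparison is with those original arguments, whose entire difficulty is establishing assertion (2): that $C_d$ is an irreducible line (this was Furushima's conjecture, settled by Ohta). Your proposal does not prove this. The appeal to ``Picard-rank constraints for a hypersurface in $\pp^3$'' is not an available tool: Grothendieck--Lefschetz gives no bound on $\Pic$ or $\cl$ of a surface in $\pp^3$ (already the quadric cone has $\cl \cong \zz$ generated by a line, not the hyperplane class), and for a normal compactification of $\cc^2$ the class group is free of rank equal to the number of boundary components --- which is precisely the quantity you are trying to bound, so the argument is circular. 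Likewise, the step ``the generic fiber of $\tilde\pi$ is a single point, hence $\deg_{\pp^3}(C_d)=1$'' does not follow: birationality of the projection from $P$ is a consequence of $\operatorname{mult}_P = d-1$ alone and carries no information about the degree of the boundary curve. Your closing paragraph concedes that this step ``is the heart of Furushima's and Ohta's original arguments,'' which is an accurate self-assessment: what you have is an outline whose central claim is deferred to the very papers being proved.

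A secondary flaw: the claim that the ``no complete curves in $\cc^2$'' principle forces $\bar X_d \cap \{z_2=0\} \subseteq C_d$ set-theoretically is wrong as reasoned. A plane through the line $C_d$ cuts $\bar X_d$ in $C_d$ plus a residual curve of degree $d-1$ lying in that plane; any such residual component meets $C_d$ (two curves in a $\pp^2$ always intersect), hence is not a complete curve inside $\cc^2$, and no contradiction arises. To produce a plane section supported on $C_d$ one needs a divisor-class argument --- once $C_d$ is known to be a line, $\cl(\bar X_d) \cong \zz[C_d]$, the hyperplane class is linearly equivalent to $dC_d$, and surjectivity of $H^0(\pp^3,\scrO(1)) \to H^0(\bar X_d,\scrO_{\bar X_d}(1))$ realizes $dC_d$ as a plane section --- and even this presupposes assertion (2). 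The normal-form manipulation and the Jacobian computation for uniqueness of $P$, as well as the sketch of $p_g(P)$ via an explicit resolution, are plausible bookkeeping once (2) and (3) are in hand, but as submitted the proposal proves neither.
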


\subsection{Dual graphs for the resolution of singularities} \label{dual-subsection}
Let $\vec \omega := (\omega_0, \ldots, \omega_{n+1})$ be a key sequence. Then to every $\vec \theta := (\theta_1, \ldots, \theta_n) \in (\cc^*)^n$, we can associate a primitive compactification $\xomegatheta$ of $\cc^2$. Moreover, $\xomegatheta$ is algebraic iff $\vec \omega$ is an {\em algebraic} key sequence, and the correspondence $(\vec \omega, \vec \theta) \mapsto \xomegatheta$ is given by Theorem \ref{projective-embedding}. The correspondence in the general case is treated in \cite{sub2-1}; in our notation it can be described as follows: define $G_1, \ldots, G_n \in A := \cc[w,y_0,y_0^{-1}, y_1, \ldots, y_{n+1}]$ as in Theorem \ref{projective-embedding} (if $\vec \omega$ is not algebraic, then at least one of the $G_k$'s will not be a polynomial). Let $I$ be the ideal in $A$ generated by $w - 1, G_1, \ldots, G_n$. Then $A/I \cong \cc[x,x^{-1},y]$ via the map $y_0 \mapsto x, y_1 \mapsto y$. Let $f_k \in \cc[x,x^{-1},y]$ be the image of $G_k$, $1 \leq k \leq n$. Consider the family of curves $C_\xi \subseteq \cc^2 \setminus V(x)$, $\xi \in \cc$, defined by $f_n^{\omega_0} = \xi x^{\omega_{n+1}}$. Then $\xomegatheta$ is the unique primitive compactification of $\cc^2 = \spec \cc[x,y]$ which {\em separates (some branches of) the curves $C_\xi$ at infinity}, i.e.\ for generic $\xi$, the closure of the curve $C_\xi$ in $\xomegatheta$ intersects generic points of the curve at infinity. It follows from the results of \cite[Corollary 4.11]{sub2-1} that every primitive compactification of $\cc^2$ is of the form $\xomegatheta$ for some appropriate $\vec \omega$ and $\vec\theta$. 

\begin{rem}[A valuation theoretic characterization of $\xomegatheta$] \label{general-key-remark}
Let $f_1, \ldots, f_n$ be as in the preceding paragraph. Then $\xomegatheta$ is the unique primitive compactification of $\cc^2 = \spec \cc[x,y]$ such that the {\em key forms} (see Remark \ref{effective-remark}) of the valuation on $\cc[x,y]$ corresponding to the curve at infinity on $\xomegatheta$ are $x,y, f_1, \ldots, f_n$.
\end{rem}

The dual graph of the minimal resolution of singularities of $\xomegatheta$ depends only on the {\em essential subsequence} (Definition \ref{key-seqn}) $\vec \omega_e$ of $\vec \omega$. The precise description of the dual graph in terms of $\vec \omega_e$ is a bit technical and it essentially corresponds to the resolution of singularities of a point at infinity on (the closure of) the curve $C_\xi$ from the preceding paragraph for generic $\xi$ - we refer to \cite[Appendix]{sub2-1} for details. Rather we now state the characterization from \cite{contractibility} of those dual graphs which appear only for algebraic, only for non-algebraic, and for both algebraic and non-algebraic compactifications.

\begin{thm}[{\cite[Theorem 2.8]{contractibility}}] \label{semigroup-prop}
Let $\vec \omega := (\omega_0, \ldots, \omega_{n+1})$ be an {\em essential} key sequence and let $\Gamma_{\vec \omega}$ be the dual graph for the minimal resolution of singularities for some (and therefore, every) primitive compactification $\bar X_{\vec \omega',\vec \theta}$ of $\cc^2$ where $\vec \omega'$ is a key sequence with essential subsequence $\vec \omega$. Then
\begin{enumerate}
\item \label{algebraic-existence} There exists a primitive {\em algebraic} compactification $\bar X$ of $\cc^2$ such that the dual graph for the minimal resolution of singularities of $\bar X$ is $\Gamma_{\vec \omega}$ iff $\vec \omega$ is an {\em algebraic} key sequence.
\item \label{non-algebraic-existence} There exists a primitive {\em non-algebraic} compactification $\bar X$ of $\cc^2$ such that the dual graph for the minimal resolution of singularities of $\bar X$ is $\Gamma_{\vec \omega}$ iff 
\begin{enumerate}
\item either $\vec \omega$ is {\em not} {\em algebraic}, or 
\item $\bigcup_{1 \leq k \leq n}\{ \alpha \in \zz\langle \omega_0, \ldots, \omega_k \rangle \setminus \zz_{\geq 0}\langle \omega_0, \ldots, \omega_k \rangle: \omega_{k+1} < \alpha < p_k\omega_k\} \neq \emptyset$.
\end{enumerate}
\end{enumerate}
\end{thm}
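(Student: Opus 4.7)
The plan is to reduce the theorem to a combinatorial statement about key sequences, using three ingredients. First, every primitive compactification of $\cc^2$ is of the form $\bar X_{\vec \omega', \vec \theta}$ for some key sequence $\vec \omega'$ and $\vec \theta$ (by the paragraph preceding Remark \ref{general-key-remark} and \cite[Corollary 4.11]{sub2-1}). Second, the dual graph of the minimal resolution of singularities of $\bar X_{\vec \omega', \vec \theta}$ depends only on the essential subsequence of $\vec \omega'$ (as stated in the paragraph preceding this theorem). Third, and crucially, $\bar X_{\vec \omega', \vec \theta}$ is algebraic iff $\vec \omega'$ is an algebraic key sequence: the reverse direction uses Theorem \ref{projective-embedding} directly, while the forward direction combines Remark \ref{effective-remark} with the observation that the key forms of $\bar X_{\vec \omega', \vec \theta}$, read off from the recursion \eqref{projective-equations} with $w = 1$, are all polynomials precisely when every $\beta_{k,0}$ (as in Remark \ref{unique-remark}) is non-negative, i.e.\ precisely when $\vec \omega'$ is algebraic.

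The combinatorial heart of the argument is the following lemma: if $\vec \omega'$ is a key sequence with essential subsequence $\vec \omega = (\omega_0, \ldots, \omega_{n+1})$, then $\vec \omega'$ is algebraic iff $\vec \omega$ itself is algebraic and every non-essential entry $\alpha$ of $\vec \omega'$ which sits between the essential entries $\omega_k$ and $\omega_{k+1}$ (for some $1 \leq k \leq n$) satisfies $\alpha \in \zz_{\geq 0}\langle\omega_0, \ldots, \omega_k\rangle$. I would prove this by induction on the number of non-essential insertions: an insertion at position $k+1$ has local $p = 1$, so its own algebraicity condition reads exactly $\alpha \in \zz_{\geq 0}\langle\omega_0, \ldots, \omega_k\rangle$; once this holds, $\alpha$ adds nothing to the semigroups that appear in subsequent algebraicity conditions, so those are equivalent to the corresponding conditions for $\vec \omega$. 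A separate easy observation handles insertions between $\omega_0$ and $\omega_1$: any such $\alpha$ must be a positive integral multiple of $\omega_0$, hence automatically lies in $\zz_{\geq 0}\langle\omega_0\rangle$, explaining why condition (b) ranges only over $1 \leq k \leq n$.

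With these ingredients the assertions follow. For \ref{algebraic-existence}, if $\vec \omega$ is algebraic then $\bar X_{\vec \omega, \vec \theta}$ (for any $\vec \theta \in (\cc^*)^n$) works; conversely, an algebraic $\bar X_{\vec \omega', \vec \theta}$ with dual graph $\Gamma_{\vec \omega}$ has algebraic $\vec \omega'$ by ingredient (iii), whose essential subsequence $\vec \omega$ is algebraic by the lemma. For \ref{non-algebraic-existence}, under (a) use $\bar X_{\vec \omega, \vec \theta}$ itself; under (b), pick $k$ and $\alpha$ witnessing nonemptiness and insert $\alpha$ between $\omega_k$ and $\omega_{k+1}$ to produce a non-algebraic key sequence $\vec \omega'$ with essential subsequence $\vec \omega$, then invoke ingredient (iii). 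The converse is the contrapositive: if neither (a) nor (b) holds, the lemma forces every key sequence with essential subsequence $\vec \omega$ to be algebraic, so every primitive compactification with dual graph $\Gamma_{\vec \omega}$ is algebraic. The main obstacle is justifying ingredient (iii) in full: we must show that a single negative $\beta_{k,0}$ in \eqref{projective-equations} already forces the last key form to be non-polynomial. The key point is that once $f_k$ acquires terms with negative $y_0$-powers, the subsequent recursive substitution $f_{k+1} = f_k^{p_{k+1}} - \theta_{k+1} \prod_j y_j^{\beta_{k+1,j}}$ cancels at most one monomial of $f_k^{p_{k+1}}$ (by the uniqueness in Remark \ref{unique-remark}), so non-polynomiality propagates to the last key form and Remark \ref{effective-remark} then rules out algebraicity.
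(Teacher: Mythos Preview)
The paper is a survey and does \emph{not} prove this theorem: it is quoted verbatim as \cite[Theorem 2.8]{contractibility} with no argument given here, so there is no ``paper's own proof'' to compare against. Your overall architecture---reduce to the claim ``$\bar X_{\vec\omega',\vec\theta}$ is algebraic iff $\vec\omega'$ is algebraic'' and then a purely combinatorial lemma about which key sequences share a given essential subsequence---is exactly the natural route and is presumably how the cited paper proceeds. The combinatorial lemma and the deduction of \ref{algebraic-existence} and \ref{non-algebraic-existence} from it are correct as you describe them; in particular, your observation that any non-essential entry between $\omega_k$ and $\omega_{k+1}$ automatically lies in $\zz\langle\omega_0,\ldots,\omega_k\rangle$ (because its insertion does not change the running $\gcd$) is what makes the set in (b) the right obstruction.

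The one genuine gap is in your justification of ingredient (iii), the forward direction. Your claim that ``the subsequent recursive substitution cancels at most one monomial of $f_k^{p_{k+1}}$ by the uniqueness in Remark \ref{unique-remark}'' is not a valid argument as written: the uniqueness in Remark \ref{unique-remark} is about the integer representation $p_k\omega_k=\sum_j\beta_{k,j}\omega_j$, not about monomials, and the term being subtracted, namely $\theta_{k+1}\,x^{\beta_{k+1,0}}y^{\beta_{k+1,1}}f_1^{\beta_{k+1,2}}\cdots f_{k-1}^{\beta_{k+1,k}}$, is a product of earlier key forms, not a single monomial in $(x,y)$. A cleaner way to get what you need is this: take the smallest $k$ with $\beta_{k,0}<0$; then $f_1,\ldots,f_{k-1}$ are polynomials and $f_k$ is not, so already not all key forms are polynomials. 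What remains is to pass from ``some key form is not a polynomial'' to ``the \emph{last} key form is not a polynomial'' (which is what Remark \ref{effective-remark} actually uses). This propagation is true, but it requires tracking the minimal $x$-exponent (equivalently, the pole order along $x=0$) through the recursion and checking that the subtraction cannot raise it; that is a short computation with the leading terms for the associated valuation, not a one-line appeal to Remark \ref{unique-remark}. You should either supply that computation or cite \cite[Proposition 4.2]{contractibility} directly for ingredient (iii).
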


\begin{example}[{\cite[Corollary 2.13, Example 2.15]{contractibility}}]\label{non-2-remexample}
The dual graph of the minimal resolution of singularities of $\bar X_{i,r}$ from Example \ref{non-example} corresponds to the essential key sequence $(2,5)$ for $r=0$ and $(2,5,10-r)$ for $1 \leq r \leq 9$. 
\begin{figure}[htp]
\begin{center}
\subfigure[Case $r = 0$]{
\begin{tikzpicture}[scale=1.25, font = \small] 	
 	\pgfmathsetmacro\edge{.75}
 	\pgfmathsetmacro\vedge{.5}
	
 	\draw[thick] (0,-\vedge) -- (0,-2*\vedge);

 	\fill[black] (-\edge, 0) circle (2pt);
 	\fill[black] (0, -\vedge) circle (2pt);
 	\fill[black] (0, - 2*\vedge) circle (2pt);
 	
 	\draw (-\edge,0 )  node (e1up) [above] {$-2$};	
 	\draw (0,-\vedge )  node (down1) [right] {$-2$};
 	\draw (0, -2*\vedge)  node (down2) [right] {$-3$};
 			 	
\end{tikzpicture}
}
\subfigure[Case $r \geq 1$]{
\begin{tikzpicture}[scale=1.25, font = \small] 	
 	\pgfmathsetmacro\dashedge{4.5}	
 	\pgfmathsetmacro\edge{.75}
 	\pgfmathsetmacro\vedge{.5}

 	\draw[thick] (-\edge,0) -- (\edge,0);
 	\draw[thick] (0,0) -- (0,-2*\vedge);
 	\draw[thick, dashed] (\edge,0) -- (\edge + \dashedge,0);
 	
 	\fill[black] (-\edge, 0) circle (2pt);
 	\fill[black] (0, 0) circle (2pt);
 	\fill[black] (0, -\vedge) circle (2pt);
 	\fill[black] (0, - 2*\vedge) circle (2pt);
 	\fill[black] (\edge, 0) circle (2pt);
 	\fill[black] (\edge + \dashedge, 0) circle (2pt);
 	
 	\draw (-\edge,0 )  node (e1up) [above] {$-2$};
 	\draw (0,0 )  node (middleup) [above] {$-2$};	
 	\draw (0,-\vedge )  node (down1) [right] {$-2$};
 	\draw (0, -2*\vedge)  node (down2) [right] {$-3$};
 	\draw (\edge,0)  node (e+1-up) [above] {$-2$};
 	\draw (\edge + \dashedge,0)  node (e-last-1-up) [above] {$-2$};
 	
 	\draw [thick, decoration={brace, mirror, raise=5pt},decorate] (\edge,0) -- (\edge + \dashedge,0);
 	\draw (\edge + 0.5*\dashedge,-0.5) node [text width= 5cm, align = center] (extranodes) {$r-1$ vertices of weight $-2$};
 			 	
\end{tikzpicture}
}
\caption{Dual graph of minimal resolution of singularities of $\bar X_{i,r}$ from Example \ref{non-example}}\label{fig:non-resolution}
\end{center}
\end{figure}
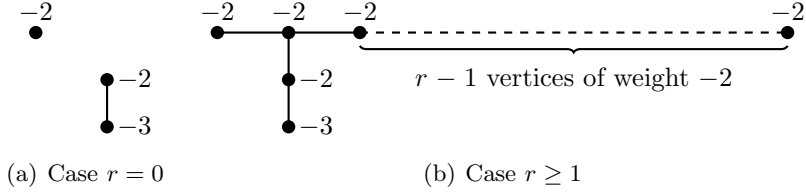
A glance at Table \ref{key-table} shows that $(2,5)$ and $(2,5,10-r)$, $1 \leq r \leq 9$, are algebraic key sequences, so that Theorem \ref{semigroup-prop} implies that each of these sequences corresponds to some algebraic primitive compactifications. Now note that for $\vec \omega := (2,5,10-r)$,  
$$\zz\langle \omega_0, \omega_1 \rangle \setminus \zz_{\geq 0}\langle \omega_0, \omega_1 \rangle = \zz\langle 2,5 \rangle\setminus \zz_{\geq 0}\langle 2,5 \rangle = \zz \setminus \zz_{\geq 0}\langle 2,5 \rangle = \{1,3\}.$$
Since for $r = 8,9$, we have $\omega_2 = 10 - r< 3$, Theorem \ref{semigroup-prop} implies that in this case $\vec \omega$ also corresponds to some non-algebraic primitive compactifications. In summary, $(2,5)$ and $(2,5,10-r)$, $1 \leq r \leq 7$, correspond to {\em only algebraic} primitive compactifications, and $(2,5,10-r)$, $8 \leq r \leq 9$, corresponds to {\em both} algebraic and non-algebraic compactifications, as it was shown in Example \ref{non-example}. \\

\begin{table}[htp]
\begin{tabular}{|>{$}l<{$}|>{$}l<{$}|>{$}l<{$}|>{$}l<{$}|}
\hline
(\omega_0, \ldots, \omega_{n+1}) & (d_0, \ldots, d_{n+1}) & (p_1, \ldots, p_{n+1}) & (p_1\omega_1, \ldots, p_{n}\omega_n) \\
\hline
(2,5) & (2,1) & (2) & \emptyset\\
(2,5,10-r) & (2,1,1) & (2,1) & (10)\\
(4,10,3,2) & (4,2,1,1) & (2,2,1) & (20,6)\\
\hline
\end{tabular}
\caption{Some key sequences $\vec \omega$ and corresponding $\vec d$, $\vec p$} \label{key-table}
\end{table}

On the other hand, for $\vec \omega = (4,10,3,2)$, Table \ref{key-table} shows that $p_2\omega_2 = 6 \not\in \zz_{\geq 0}\langle 4, 10 \rangle = \zz_{\geq 0}\langle \omega_0, \omega_1 \rangle$, so that $\vec \omega$ is {\em not} an algebraic key sequence. Consequently Theorem \ref{semigroup-prop} implies that $\Gamma_{(4,10,3,2)}$ corresponds to {\em only non-algebraic} primitive compactifications (see Figure \ref{fig:non-2-resolution}).

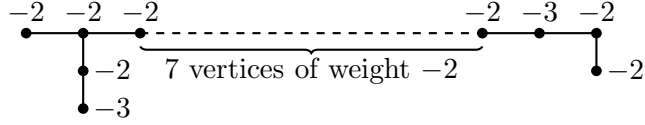
\begin{figure}[htp]

\begin{center}
\begin{tikzpicture}
        \pgfmathsetmacro\dashedge{4.5}  
        \pgfmathsetmacro\edge{.75}
        \pgfmathsetmacro\vedge{.5}
        
        \pgfmathsetmacro\leftone{5*\edge}
        \pgfmathsetmacro\labelpos{-4*\vedge}

        \draw[thick] (-\edge,0) -- (\edge,0);
        \draw[thick] (0,0) -- (0,-2*\vedge);
        \draw[thick, dashed] (\edge,0) -- (\edge + \dashedge,0);
        \draw[thick] (\edge + \dashedge,0) -- (3*\edge + \dashedge,0);
        \draw[thick] (3*\edge + \dashedge,0) -- (3*\edge + \dashedge,-\vedge);
        
        \fill[black] (-\edge, 0) circle (2pt);
        \fill[black] (0, 0) circle (2pt);
        \fill[black] (0, -\vedge) circle (2pt);
        \fill[black] (0, - 2*\vedge) circle (2pt);
        \fill[black] (\edge, 0) circle (2pt);
        \fill[black] (\edge + \dashedge, 0) circle (2pt);
        \fill[black] (2*\edge + \dashedge, 0) circle (2pt);
        \fill[black] (3*\edge + \dashedge, 0) circle (2pt);
        \fill[black] (3*\edge + \dashedge, -\vedge) circle (2pt);
        
        \draw (-\edge,0 )  node (e1up) [above] {$-2$};
        \draw (0,0 )  node (middleup) [above] {$-2$};   
        \draw (0,-\vedge )  node (down1) [right] {$-2$};
        \draw (0, -2*\vedge)  node (down2) [right] {$-3$};
        \draw (\edge,0)  node (e+1-up) [above] {$-2$};
        \draw (\edge + \dashedge,0)  node (e-last-1-up) [above] {$-2$};
        \draw (2*\edge + \dashedge,0)  node [above] {$-3$};
        \draw (3*\edge + \dashedge,0)  node [above] {$-2$};
        \draw (3*\edge + \dashedge,-\vedge)  node [right] {$-2$};
        
        \draw [thick, decoration={brace, mirror, raise=5pt},decorate] (\edge,0) -- (\edge + \dashedge,0);
        \draw (\edge + 0.5*\dashedge,-0.5) node [text width= 5cm, align = center] (extranodes) {$7$ vertices of weight $-2$};
                                
\end{tikzpicture}
\caption{$\Gamma_{(4,10,3,2)}$}\label{fig:non-2-resolution}
\end{center}
\end{figure}
\end{example}

\section{Groups of automorphism and moduli spaces of primitive compactifications}
In \cite[Section 5]{sub2-2} the groups of automorphisms and moduli spaces of primitive compactifications have been precisely worked out. Here we omit the precise statements and content ourselves with the description of some special cases.

\begin{defn}
A key sequence $\vec \omega = (\omega_0, \ldots, \omega_{n+1})$ is in the {\em normal form} iff 
\begin{enumerate}
\item either $n = 0$, or 
\item $\omega_0$ does not divide $\omega_1$ and $\omega_1/\omega_0 > 1$.
\end{enumerate} 
\end{defn} 

\begin{thm}[{cf.\ \cite[Corollary 5.4]{sub2-2}}] \label{aut-thm}
Let $\bar X$ be a primitive compactification of $\cc^2$. Then $\bar X \cong \xomegatheta$ for some key sequence $\vec\omega := (\omega_0, \ldots, \omega_{n+1})$ in the normal form (and some appropriate $\vec\theta$). Moreover,
\begin{enumerate}
\item $n = 0$ iff $\bar X$ is isomorphic to some weighted projective space $\pp^2(1,p,q)$.
\item If $\bar X \not\cong \pp^2(1,1,q)$ for any $q \geq 1$, then there are coordinates $(x,y)$ on $\cc^2$ such that for every automorphism $F$ of $\bar X$, $F|_{\cc^2}$ is of the form $(x,y) \mapsto (ax + b, a'y + f(x))$ for some $a,a',b\in \cc$ and $f \in \cc[x]$. Moreover, if $n > 1$ then $a$ and $a'$ are some roots of unity and $b = f = 0$. 
\end{enumerate}
\end{thm}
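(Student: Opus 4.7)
The plan is to first establish the normal-form representation by massaging the parameters of an arbitrary representation $\bar X \cong \bar X_{\vec\omega', \vec\theta'}$ from Theorem \ref{projective-embedding} (together with its non-algebraic analogue described just above Remark \ref{general-key-remark}), and then to analyze the automorphism group via the unique divisorial valuation $\nu$ at infinity. To reach the normal form, I would iterate two coordinate changes on $\cc^2$: (i) if $\omega_0 > \omega_1$, swap $x$ and $y$, which interchanges $\omega_0$ with $\omega_1$ in the key sequence; (ii) if $\omega_0 \mid \omega_1$ (equivalently $p_1 = 1$), replace $y$ by the first key form $f_1$, which shortens the sequence to $(\omega_0, \omega_2, \ldots, \omega_{n+1})$ while leaving $\bar X$ unchanged. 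Each application of (ii) strictly decreases $n$, and (i) makes (ii) applicable whenever $\omega_0 = \omega_1$ or $\omega_1 < \omega_0$ with $\omega_1 \mid \omega_0$; iteration therefore terminates with a normal-form key sequence.

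For part (1), the direction $n = 0 \Rightarrow \bar X \cong \pp^2(1, \omega_0, \omega_1)$ is immediate from Theorem \ref{projective-embedding}, because there are no defining equations $G_k$ (which exist only for $1 \leq k \leq n$) cutting out a subvariety of $\WP = \pp^2(1, \omega_0, \omega_1)$. For the converse, I would compute directly that the divisorial valuation along the coordinate curve $\{w = 0\}$ of $\pp^2(1, p, q)$ yields the key sequence $(p, q)$ after the normal-form reduction above, so $n = 0$.

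For part (2), the key observation is that every automorphism $F$ of $\bar X$ must fix the unique curve at infinity $C_\infty$ setwise, so $\sigma := F|_{\cc^2}$ is a polynomial automorphism of $\cc^2$ preserving the divisorial valuation $\nu$ along $C_\infty$. In the normal form with $n \geq 1$, the first two key forms are $x$ and $y$ with $\nu(x) = \omega_0 < \omega_1 = \nu(y)$, and the condition $\omega_0 \nmid \omega_1$ ensures that no polynomial in $x$ alone can attain the value $\omega_1$. Consequently every element of $\cc[x,y]$ of $\nu$-value $\omega_0$ is of the form $ax + b$ and every element of $\nu$-value $\omega_1$ is of the form $a'y + f(x)$ with $f \in \cc[x]$; applying this to $\sigma(x)$ and $\sigma(y)$ produces the claimed triangular form. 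The exclusion $\bar X \not\cong \pp^2(1, 1, q)$ is precisely what guarantees that after normal-form reduction we have $n \geq 1$ or $\omega_0 \geq 2$: the $\pp^2(1, 1, q)$ family falls outside the above valuation analysis because $n = 0$ and $\omega_0 = 1$ (and in particular $\pp^2$ itself carries the transitive $\mathrm{PGL}_3$-action, which is manifestly not of the triangular form).

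For the final rigidity statement when $n > 1$, I would proceed inductively on $k = 2, \ldots, n$: substituting the already-determined expressions for $\sigma(x)$ and $\sigma(y)$ into the key-form identity $\sigma(f_k) = c_k f_k + (\text{terms of lower } \nu\text{-value})$ and comparing leading monomials yields algebraic relations that (a) force the scaling factor $c_k$ to be the $p_k$-th power of $a'$, and (b) constrain $(a, a', b, f)$ through the exponents $\beta_{k,j}$ from Remark \ref{unique-remark}. Already the $k = 2$ case forces $b = 0$ and $f \equiv 0$, since any nonzero translation would contribute leading monomials not available in $f_2$. The multiplicative relations obtained for $k = 2, \ldots, n$ then provide enough independent constraints on $(a, a')$ to pin them down as roots of unity of explicitly computable orders. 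The hard part will be the combinatorial bookkeeping in this last step — in particular, verifying that the successive key-form constraints are genuinely independent rather than collapsing to redundant identities, and confirming that the vanishing $b = 0$, $f \equiv 0$ propagates cleanly through the inductive step; this is the heart of the calculation carried out in \cite[Section 5]{sub2-2}.
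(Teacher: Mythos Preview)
The paper does not supply its own proof of this theorem: it is stated as a corollary of \cite[Corollary 5.4]{sub2-2}, and the surrounding section explicitly says that the precise statements (and proofs) of the automorphism and moduli results are omitted in favour of citing \cite[Section 5]{sub2-2}. So there is nothing in the paper to compare your argument against beyond the valuation-theoretic framework (key sequences and key forms) that the paper sets up and that you are using.

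That said, a couple of points in your sketch deserve tightening. First, your deduction that $\sigma(x)=ax+b$ and $\sigma(y)=a'y+f(x)$ from $\nu(\sigma(x))=\omega_0$, $\nu(\sigma(y))=\omega_1$ implicitly assumes that the $\nu$-value of a polynomial is the maximum of $i\omega_0+j\omega_1$ over its monomials $x^iy^j$; but precisely because $n\geq 1$, there are key-form cancellations (e.g.\ $y^{p_1}-\theta_1 x^{\beta_{1,0}}$ has $\nu$-value $\omega_2<p_1\omega_1$), so you must argue that no such cancellation can bring the generic value down to $\omega_0$ or $\omega_1$. This is true, but it uses the structure of the semigroup $\zz_{\geq 0}\langle\omega_0,\ldots,\omega_{n+1}\rangle$ and not just $\omega_0\nmid\omega_1$. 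Second, your treatment of the excluded case is slightly off: when $n=0$ and $\omega_0\geq 2$ (i.e.\ $\bar X\cong\pp^2(1,p,q)$ with $p\geq 2$), the normal form imposes no inequality between $\omega_0$ and $\omega_1$, so your valuation argument as written does not apply directly; you need either to normalize further (say $\omega_0<\omega_1$, using $\pp^2(1,p,q)\cong\pp^2(1,q,p)$) or to handle this toric case by hand. Finally, your claim that ``already the $k=2$ case forces $b=0$ and $f\equiv 0$'' skips over the role of $f_1$: the constraint coming from preserving $\nu(f_1)=\omega_2$ is what first ties $a,a'$ together and begins restricting $f$, and one typically needs the full chain $f_1,\ldots,f_n$ (not just $f_2$ onward) to reach the rigidity conclusion for $n>1$.
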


\begin{thm}[{cf.\ \cite[Corollary 5.8]{sub2-2}}] \label{moduli-thm}
Let $\vec\omega := (\omega_0, \ldots, \omega_{n+1})$ be an essential key sequence in the normal form and $\scrxomega$ (resp.\ $\scrxomega^{alg}$) be the space of normal analytic (resp.\ algebraic) surfaces which are isomorphic to $\bar X_{\vec \omega', \vec \theta}$ for some key sequence $\vec \omega'$ with essential subsequence $\vec \omega$ and some $\vec\theta$. Then
\begin{enumerate}
\item $\scrxomega$ is of the form $\left((\cc^*)^k \times \cc^l\right)/G$ for some subgroup $G$ of $\cc^*$. 
\item $\scrxomega^{alg}$ is either empty (in the case that $\vec\omega$ is not algebraic), or a closed subset of $\scrxomega$ of the form $\left((\cc^*)^k \times \cc^{l'}\right)/G$ for some $l' \leq l$.
\end{enumerate}
\end{thm}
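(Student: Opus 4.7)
The plan is to exploit the explicit parameterization of primitive compactifications by pairs $(\vec\omega', \vec\theta)$: Theorem \ref{projective-embedding} in the algebraic case, and its analogue sketched just before Remark \ref{general-key-remark} in the general analytic case. Concretely, $\scrxomega$ is a quotient of the raw parameter space $\coprod_{\vec\omega'} (\cc^*)^{n'}$ --- disjoint union over all key sequences $\vec\omega' = (\omega'_0, \ldots, \omega'_{n'+1})$ with essential subsequence $\vec\omega$ --- by the equivalence relation of isomorphism as normal analytic surfaces. Each such $\vec\omega'$ arises from $\vec\omega$ by inserting, between consecutive essential entries, a finite (possibly empty) chain of non-essential terms (positions with $p'_j=1$) subject only to the key-sequence inequalities; this parameterizes a discrete but combinatorially unbounded set of \emph{insertion patterns}.

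Next I quotient by isomorphism. By Theorem \ref{aut-thm}, every automorphism of $\xomegatheta$ restricts on $\cc^2$ to $(x,y)\mapsto(ax+b, a'y+f(x))$. The multiplicative part $(x,y)\mapsto(ax,a'y)$ rescales each $\theta'_k$ by a Laurent monomial in $(a,a')$; after this quotient the essential $\theta'_k$'s (those carried over from $\vec\omega$) contribute a factor $(\cc^*)^k$, where $k$ is determined by the rank of the associated weight matrix, and a residual finite root-of-unity subgroup $G\subseteq\cc^*$ survives (consistent with the second part of Theorem \ref{aut-thm}). The additive freedom $(x,y)\mapsto(x+b, y+f(x))$ absorbs the non-essential $\theta'_k$'s: degenerating such a $\theta'_k$ corresponds to shortening $\vec\omega'$ by removing its $k$-th slot, so the tori attached to different insertion patterns glue along their natural boundary strata into a single global $\cc^l$-factor, where $l$ is the total number of available non-essential slots.

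For the algebraic assertion, I invoke part \ref{algebraic-existence} of Theorem \ref{semigroup-prop}: $\xomegatheta$ is algebraic iff $\vec\omega'$ is algebraic. If $\vec\omega$ is not algebraic then no extension $\vec\omega'$ is either --- the algebraicity condition at each essential position requires $p_k\omega_k\in\zz_{\geq 0}\langle\omega'_0,\ldots,\omega'_{k-1}\rangle$, and any inserted non-essential term would itself have to lie in this semigroup in order to be algebraic, so such insertions cannot enlarge it. Hence $\scrxomega^{alg}=\emptyset$ in this case. If $\vec\omega$ is algebraic, then algebraicity cuts out a closed subcollection of insertion patterns, which after the gluing above yields a closed subvariety of $\scrxomega$ of the form $((\cc^*)^k\times\cc^{l'})/G$ for some $l'\leq l$.

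The main obstacle is the gluing step: one must verify that the disjoint union of tori, indexed by insertion patterns and quotiented by the automorphism action, assembles into an honest variety of the advertised form $((\cc^*)^k\times\cc^l)/G$ rather than some topologically more complicated object. Concretely, one has to check that the limit of $\xomegatheta$ as a non-essential $\theta'_k$ degenerates agrees with $\bar X_{\vec\omega'',\vec\theta''}$ for the shortened key sequence $\vec\omega''$, and that these cross-stratum identifications are mutually compatible. Carrying out this bookkeeping, by combining Theorem \ref{aut-thm} with the explicit defining equations from Theorem \ref{projective-embedding}, is the principal technical step.
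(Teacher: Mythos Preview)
The paper does not give a proof of Theorem \ref{moduli-thm}: this is a survey article, and the result is simply quoted from \cite[Corollary 5.8]{sub2-2} (as announced at the start of Section 6, ``Here we omit the precise statements and content ourselves with the description of some special cases''). There is therefore no paper proof to compare your sketch against.

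On the merits of your outline: the overall architecture --- parametrize by $(\vec\omega',\vec\theta)$, quotient by isomorphism, and glue across insertion patterns --- is the correct one, and you rightly flag the gluing step as the real work. Two points deserve correction. First, Theorem \ref{aut-thm} computes \emph{stabilizers}: it tells you which automorphisms of $\cc^2$ extend to a fixed $\xomegatheta$. For the moduli quotient you instead need the full action of $\mathrm{Aut}(\cc^2)$ on the parameter space, identifying $\bar X_{\vec\omega',\vec\theta}$ with $\bar X_{\vec\omega'',\vec\theta''}$ whenever some polynomial automorphism of $\cc^2$ carries one to the other; the stabilizer description is a consequence of this analysis, not its input. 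Second, your appeal to Theorem \ref{semigroup-prop} for ``$\xomegatheta$ is algebraic iff $\vec\omega'$ is algebraic'' is a miscitation. Theorem \ref{semigroup-prop} operates at the level of the \emph{essential} subsequence (equivalently, the dual graph) and explicitly allows both algebraic and non-algebraic surfaces to share the same $\Gamma_{\vec\omega}$; see Example \ref{non-2-remexample}. The statement you actually need --- that $\bar X_{\vec\omega',\vec\theta}$ is algebraic iff the \emph{full} key sequence $\vec\omega'$ is algebraic --- comes from Theorem \ref{projective-embedding} together with Remark \ref{effective-remark} (algebraicity is equivalent to the key forms being polynomials, which fails exactly when some $\beta_{k,0}<0$, i.e., when condition \ref{semigroup-property} of Definition \ref{key-seqn} fails for $\vec\omega'$). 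With that correction your argument for the shape of $\scrxomega^{alg}$ goes through: algebraicity of $\vec\omega'$ restricts which non-essential insertions are permitted, cutting the $\cc^l$ factor down to $\cc^{l'}$ while leaving the $(\cc^*)^k$ factor and the group $G$ untouched.
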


\begin{rem}
The correspondence of Theorem \ref{algebraic-place} between primitive algebraic compactifications with $\cc^2$ and planar curves with one place at infinity extends to their moduli spaces. The moduli spaces of curves with one place at infinity are of the form $(\cc^*)^k \times \cc^l$ for some $k,l\geq 0$ \cite[Corollary 1]{fujiki}. The extra complexity (i.e.\ action by the group $G$ from Theorem \ref{moduli-thm}) in the structure of the moduli spaces of primitive algebraic compactifications comes from the action of their groups of automorphisms. 
\end{rem}

Using (the precise version in \cite[Corollary 5.8]{sub2-2} of) Theorem \ref{aut-thm} it can be shown that $\pp^2$ is the only normal analytic surface of Picard rank $1$ which admits a $\mathbb{G}^2_a$ action with an open orbit \cite[Corollary 6.2]{sub2-2}. 

\bibliographystyle{alpha}
\bibliography{../../utilities/bibi}


\end{document}